\newtheorem{theorem}{Theorem}[section]
\newtheorem{lemma}[theorem]{Lemma}
\newtheorem{corollary}[theorem]{Corollary}
\newtheorem{proposition}[theorem]{Proposition}
\newtheorem{example}[theorem]{Example}
\newtheorem{problem}[theorem]{Problem}
\theoremstyle{definition}
\theoremstyle{definition}
\newtheorem{definition}[theorem]{Definition}
\newtheorem{remark}[theorem]{Remark}
\newcommand{\restrict}{\,{\mathbin{\vert\mkern-0.3mu\grave{}}}\,}
\newcommand{\luk}{\L u\-ka\-s\-ie\-wicz}
\newcommand{\remove}[1]{}
\DeclareMathOperator{\McNn}{\mathcal M([0,1]^{\it n})}
\DeclareMathOperator{\McN}{\mathcal M}
\DeclareMathOperator{\conv}{\rm conv}
\DeclareMathOperator{\den}{\rm den}
\DeclareMathOperator{\I}{[0,1]}
\DeclareMathOperator{\cube}{[0,1]^{\it n}}
\DeclareMathOperator{\kube}{[0,1]^{\it k}}
 \title[MV-algebraic Stone-Weierstrass]
{A Stone-Weierstrass 
theorem 
for  MV-algebras and unital $\ell$-groups}
\author{\sc Leonardo Manuel Cabrer and Daniele Mundici}
\address[L.M. Cabrer]{Department of Statistics,
Computer Science and Applications,  ``Giu\-sep\-pe Parenti''\\ 
University of Florence\\
Viale Morgagni 59 
50134\\ Florence \\
Italy}
\email{ l.cabrer@disia.unifi.it }
\address[D. Mundici]{Department of
Mathematics and Computer Science  ``Ulisse Dini'' \\
University of Florence\\
Viale Morgagni 67/A \\
I-50134 Florence \\
Italy}
\email{ mundici@math.unifi.it }
\begin{document}

\thanks{2010 {\it Mathematics Subject Classification.}
Primary:  06D35.    Secondary:   03B50, 03D40,
  05E45, 06F20, 08A50, 08B30,  
  52B11, 52B20,  54C15,     55U10,
57Q05, 57Q25}

\keywords{MV-algebra, lattice ordered abelian group,
strong order  unit,  unital $\ell$-group, projective MV-algebra, 
 infinite-valued \luk\ calculus 
Stone-Weierstrass theorem, retraction,  McNaughton function, 
piecewise linear function,  basis,  Schauder hat,
regular triangulation, unimodular triangulation, duality, polyhedron,
  finite presentation, isomorphism problem, Markov unrecognizability
  theorem.}

\begin{abstract}
Working jointly in the equivalent categories of MV-al\-ge\-bras and
lattice-ordered abelian groups with strong order unit 
 (for short, unital $\ell$-groups), 
we prove that  isomorphism is a sufficient condition for
a separating subalgebra $A$ of a finitely presented  algebra $F$
to coincide with $F$.    
The separation and isomorphism conditions do not
individually  imply
$A=F$.   Various related problems,
like the separation property of $A$, 
 or  $A\cong F$  (for   $A$ a separating subalgebra of $F$),
 are   shown to be (Turing-)decidable.  
We use tools from algebraic topology, category theory, 
polyhedral geometry and computational algebraic logic.
\end{abstract}

\maketitle



\section{Introduction}
A {\it unital $\ell$-group}, \cite{bigkeiwol, gla}
group equipped with a translation invariant
 lattice structure, and $u\geq 0$ 
  is an element whose positive integer 
  multiples eventually dominate
every element of $G$.
An {\it MV-algebra} $A=(A,0,\oplus,\neg)$ is an abelian monoid $(A,0,\oplus)$
equipped
with an operation $\neg$
such that   $\neg\neg x=x, \,\,\,x\oplus \neg 0=\neg 0$
and $\neg(\neg x \oplus y )\oplus y=\neg(\neg y \oplus x )\oplus x$.  
Recently, MV-algebras, and especially finitely presented MV-algebras, 
\cite{marspa},   \cite[\S 6]{mun11}, 
have   found  applications  to
lattice-ordered abelian groups 
\cite{cab, cabmun-ja, cabmun-ccm, mmm, mun08},  
  the Farey-Stern-Brocot  AF C$^*$-algebra
of   \cite{mun88},  (see \cite{boc,eck,mun-lincei}),
probability and measure theory,
\cite{forum,mun-cpc}, 
multisets \cite{cigmar},
and    vector lattices \cite{ped}.  
The versatility of
 MV-algebras stems from a number of factors, including:
\begin{itemize}
\item[(i)] The categorical
equivalence  $\Gamma$  between  unital 
$\ell$-groups and MV-algebras  \cite{mun86},  which endows 
unital $\ell$-groups with the equational machinery of free algebras,
finite presentability, and word problems,
 despite the archimedean property of
the unit is not definable by equations.
\item[(ii)]  The duality
between finitely presented MV-algebras and rational polyhedra, \cite{cab, marspa, mun11}.
\item[(iii)]   The one-to-one correspondence,  via  $\Gamma$
and Grothendieck's  $K_0$,  between countable 
MV-algebras and   AF C$^*$-algebras 
whose Murray-von Neumann order of
projections is a lattice,  \cite{mun86}.   
\item[(iv)]   The deductive-algorithmic machinery of  the
 infinite-valued \luk\ calculus  \L$_\infty$  is 
 immediately applicable to MV-algebras,
 \cite{cigdotmun,mun11}.
 \end{itemize}

This paper deals with
 finitely generated subalgebras
of finitely presented MV-algebras and  unital $\ell$-groups.
We will preferably  focus on the  equational class of
MV-algebras, where freeness and finite presentations
are immediately definable.
Finitely presented MV-algebras are the Lindenbaum 
algebras of finitely axiomatizable theories in
 \L$_\infty$. Finitely generated projective
 MV-algebras, in particular,  are a key tool for
 the proof-theory of    \L$_\infty$ (see \cite{cab,  jer, marspa}
 for an algebraic analysis of admissibility, exactness and unification
 in   \L$_\infty$).
 Remarkably enough,  the characterization of 
 projective  MV-algebras and unital $\ell$-groups 
 is a deep {\it open} problem in algebraic topology, 
 showing that  unital $\ell$-groups have
 a greater complexity than  $\ell$-groups,  
(see \cite{cabmun-ja, cabmun-ccm}). 
 As a matter of fact, while
  the well-known  Baker-Beynon duality
  (\cite{bey} and references therein), 
shows  that finitely presented $\ell$-groups
coincide with finitely generated projective $\ell$-groups,
the class of  finitely generated projective   unital   $\ell$-groups
(resp., 
finitely generated projective
MV-algebras) is strictly contained in the class of 
 finitely presented unital   $\ell$-groups (resp., 
  finitely presented MV-algebras).

Let $A$ be a finitely generated subalgebra  of a 
finitely presented MV-algebra,
(or  unital $\ell$-group)   $F$.
In Theorem \ref{theorem:separation-is-decidable} 
we prove: it is decidable whether $A$ is separating. 
If  $A$ is separating  and $F$ is a finitely generated free MV-algebra
then $A$ is projective. This is proved in Theorem \ref{theorem:projective}.
Further,  $A=F$ iff $A\cong F$: this is our MV-algebraic
 Stone Weierstrass theorem 
(\ref{theorem:sw}). 
For separating subalgebras $A$
of free $n$-generator  MV-algebras
or unital $\ell$-groups,
  the isomorphism problem
$A\cong F$
  is decidable. See Theorem \ref{theorem:decidable-sw}.


 As is well known,  \cite[9.1]{cigdotmun}, the free $n$-generator
MV-algebra $\McNn$ consists of all
$n$-variable McNaughton functions defined
on $\I^n$. For any MV-term $\tau=\tau(X_1,\ldots,X_n)$
we let $\hat\tau\in \McNn$ be obtained
 by evaluating $\tau$ in $\McNn$.
In Section \ref{section:method} 
we introduce a method  to write down a list
of MV-terms  $\tau_1,\ldots,\tau_k$  in the variables
$X_1,\ldots,X_n$ in such a way that  the subalgebra
 of  $\McNn$  
generated by 
$\hat\tau_1,\ldots,\hat\tau_k$  is
  {\it separating and  distinct} from $\McNn$.
In Theorem   \ref{theorem:delta-basis} we prove that every  
  finitely generated separating
proper subalgebra $A$ of $\McNn$ is obtainable by this method.
In the light of Theorem \ref{theorem:projective},   a large class of
projective non-free  MV-algebras can be effectively introduced.
%
%
%
%
%
In  Section \ref{section:basis}, we connect our presentation of 
finitely generated separating subalgebra $A$ of $\McNn$ with the 
notion of basis \cite[\S 6]{mun11}. In Theorem   \ref{theorem:effective},
for any finitely generated separating subalgebra $A$ of $\McNn$  
we provide an effective method to transform 
every generating set of $A$ into a 
basis 
of $A$. 
 As  another application,
 in  Section \ref{section:recognizing},
we prove the decidability
 of the problem of recognizing whether two
different sets of terms generate the same 
separating subalgebra of 
a free   MV-algebra.

Finally, in
 Section \ref{section:final} we discuss the mutual relations between
presentations  of MV-algebras as {\it finitely generated
subalgebras}
of free MV-algebras, 
and the traditional finite presentations in terms of {\it principal
quotients} of free MV-algebras.
Many results proved in this paper for separating
   finitely generated  unital $\ell$-subgroups of free
unital $\ell$-groups,  fail for
finitely presented $\ell$-groups, and are open problems for 
  finitely presented  unital $\ell$-groups.
 The separation hypothesis   plays a crucial role in
most decidability results of the earlier sections.
Actually, the final two results of this paper
(Theorems  \ref{theorem:generators-to-quotient}
and \ref{theorem:finale}) show that
without  this hypothesis,
  decision problems for a 
subalgebra $A$ of $\McNn$ generated by
$\hat\tau_1,\ldots,\hat\tau_k$ become  
as difficult as their classical counterparts where $A$
is  
presented  as a principal
quotient of $\McNn$. 
%
%
%
\section{MV-algebras, rational polyhedra, regular triangulations}
\label{section:presentation}
\subsubsection*{MV-algebras, \cite{cigdotmun,mun11}}
We assume familiarity with the categorical equivalence  $\Gamma$ 
between MV-algebras and unital $\ell$-groups, \cite{mun86,cigdotmun}.
%
For any  closed  set $Y \subseteq \cube$ we let
$\McN(Y)$ denote the MV-algebra of restrictions to $Y$
of all McNaughton functions defined on $\cube$.
A set  $S\subseteq \McN(Y)$
is said to be {\it separating}  (or,  $S$ {\it separates points of $Y$})
   if for all $x,y\in Y$ such that $x\neq y$ 
there is $f\in S$
with $f(x)\not=f(y)$.
Unless otherwise specified,  $Y$ will be nonempty,
whence  the MV-algebra $ \McN(Y)$ will be  nontrivial.

By an  {\it MV-term}  
$\tau = \tau(X_{1},\ldots,X_{n})$ 
we mean  a string of symbols obtained from
the variable symbols $X_{i}$ and the constant symbol $0$ by
a finite number of applications of the MV-algebraic connectives
$\neg,\oplus$.  
The map $\,\,\,\hat{}\,\,\,$  sending $X_{i}$ to the $i$th coordinate
function $\pi_{i}\colon [0,1]^{n}\to [0,1]$  
canonically extends to a map interpreting
each MV-term $\tau(X_1,\ldots,X_n)$ as a McNaughton
 function $\hat{\tau}\in \McNn$.  
 McNaughton theorem 
 \cite[9.1.5]{cigdotmun} 
 states that this map is onto the
 MV-algebra $\McNn$.
The set $\pi_1,\ldots,\pi_n$
freely generates  the free MV-algebra $\McNn$. 
 
\subsubsection*{Rational polyhedra 
and their regular triangulations, \cite{mun11, sta}}
Let  $n=1,2,\ldots$ be a fixed integer.
A point  $x$  lying in the $n$-cube  $\cube$ is said to be
 {\it rational} if
so are its coordinates.   In this case,  there are uniquely
determined integers  $0\leq c_i \leq d_i$  such that 
 $x$ can be written as 
$
x=(c_1/d_1,\ldots,c_n/d_n),\,$ with $  c_i \,\,{\rm and} \,\,
d_i$    relatively prime for each  $i=1,\ldots,n$.
By definition, the
 {\it homogeneous correspondent} of $x$ is the integer
vector
$
\tilde x = (dc_1/d_1,\ldots,dc_n/d_n,d)  \in \mathbb Z^{n+1},
$
where $d>0$ is the least common multiple of $d_1,\ldots,d_n$.
The integer $d$ is said to be the {\it denominator of }  $x$,
denoted  $\den(x)$.
A {\it rational polyhedron  $P$ in $\cube$}  is a 
subset of  $\cube$  which is a finite union of simplexes in $\cube$
with rational vertices.  
A polyhedral complex $\Pi$ is said to be {\it rational}  if the
vertices of each polyhedron in $\Pi$ are rational.  

A rational $m$-dimensional
simplex  $T=\conv(v_0,\ldots,v_m)\subseteq \cube$ 
 is {\it regular} if the set   $\{\tilde v_0, . . . , \tilde v_m\}$ of homogeneous
 correspondents   of its vertices  can be extended to a  basis of
 the free abelian group
 $\mathbb Z^{n+1}$.  An (always finite)  simplicial complex
 $\Delta$  is
 {\it regular}  if each one of its simplexes is {\it regular}.
 The {\it support}  $|\Delta|$ of  $\Delta$, i.e., the point-set union of all
 simplexes of $\Delta$, is the most general possible
 rational polyhedron in $\mathbb R^n$  (see \cite[2.10]{mun11}).
 We also say that $\Delta$ is a regular {\it triangulation} of the
 rational polyhedron $|\Delta|$.

 Regular simplexes and complexes are called ``unimodular''
 in \cite{cigdotmun, mmm, marspa, mun08}, and ``(Farey) regular''
 in \cite{mun-cpc}.
   In the literature on polyhedral topology, 
 notably in \cite{sta}, the adjective ``regular''
 has a different meaning.
 Throughout this paper, the adjective ``linear'' is to be understood in the
 affine sense.

\subsubsection*{Finitely presented MV-algebras are dual to
rational polyhedra, \cite{cab,marspa, mun11}} 
An MV-algebra $A$ is said to be {\it finitely presented} if it
 is isomorphic to the quotient MV-algebra
  $\McNn/\mathfrak j$, for some $n=1,2,\ldots,$ 
and some principal ideal $\mathfrak j$ of $\McNn$. 
If $\mathfrak j$ is generated by $g\in \McNn$
 then $\McNn/\mathfrak j\cong\McN(g^{-1}(0))$,
 and $g^{-1}(0)$ is a  (possibly empty) 
  rational polyhedron in $\I^n$.

Given two rational polyhedra $P\subseteq \cube$ and  $Q\subseteq \kube$ a {\it $\mathbb{Z}$-map}
 is a continuous piecewise linear map $\eta\colon P \to Q$  
    such that each linear piece of $\eta$ has
    integer coefficients.    
    Following  \cite[3.2]{mun11},  given
rational polyhedra  $P\subseteq \cube$  and $Q\subseteq \kube$,
we write  $P\cong_\mathbb Z Q$ (and say that $P$ and $Q$
are {\it $\mathbb Z$-homeomorphic}) if there is a homeomorphism
$h$  of $P$ onto $Q$ such that both $h$ and $h^{-1}$ 
are $\mathbb Z$-maps.  
We also say that $h$ is a
{\it $\mathbb Z$-homeomorphism}.

The functor $\McN$  sending
 each polyhedron $P$ to the MV-algebra
 $\McN(P)$,   and each  $\mathbb{Z}$-map $\eta\colon P\to Q$ to the map $\McN(\eta)\colon \McN(Q)\to \McN(P)$ defined by
 \begin{equation}
 \label{equation:functor} 
 \McN(\eta)\colon f\mapsto  f\circ \eta,\,\,\,\mbox{for any }f\in\McN(Q),
\,\, \mbox{\rm where $\circ$ denotes composition,}
 \end{equation}
  determines a categorical equivalence between
    rational polyhedra with $\mathbb Z$-maps, and
    the opposite of the category of  
 finitely presented MV-algebras with homomorphisms.
 For short,  $\McN$ is a {\it duality} between these categories.
 See \cite{cab, marspa, mun11} for further details.

\section{``Presenting'' MV-algebras by a finite list of MV-terms}
Every finite set of MV-terms
$\tau_1,\ldots,\tau_k$ in the variables $X_1,\ldots,X_n$
 determines the subalgebra $A$ of $\McNn$
generated by the McNaughton functions
$\hat\tau_1,\ldots,\hat\tau_k$. 
Then $A$ is  finitely presented \cite[6.6]{mun11}.
As we will show throughout this paper, when  $A$ is 
presented  via generators 
$\hat\tau_1,\ldots,\hat\tau_k,\,\,$   several decision problems
turn out to be solvable.
 
 \smallskip
 Our first example is as follows:   
  
  \begin{theorem}
  \label{theorem:separation-is-decidable}
The following {\rm  separation problem}  
is decidable:

\smallskip
\noindent ${\mathsf{INSTANCE}:}$   A list of MV-terms 
$\tau_1,\ldots,\tau_k$ in the variables $X_1,\ldots,X_n$.

 \smallskip
\noindent ${\mathsf{QUESTION}:}$ Does the set  of  functions 
$\{\hat\tau_1,\ldots,\hat\tau_k\}$ separate points of $\cube$ ? 
  \end{theorem}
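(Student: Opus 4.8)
The plan is to recognize that separation is nothing but injectivity of a single piecewise linear map, and then to decide injectivity by a finite battery of exact linear-programming tests. First I would observe that the set $\{\hat\tau_1,\ldots,\hat\tau_k\}$ separates the points of $\cube$ if and only if the map $F = (\hat\tau_1,\ldots,\hat\tau_k)\colon \cube \to [0,1]^k$ is injective, because $F(x)=F(y)$ holds exactly when $\hat\tau_i(x)=\hat\tau_i(y)$ for every $i$. Since each $\hat\tau_i$ is a McNaughton function, $F$ is a $\mathbb Z$-map, so the separation problem becomes: decide injectivity of a $\mathbb Z$-map presented by MV-terms. It therefore suffices to give an algorithm for the latter.

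The key structural fact is that each $\hat\tau_i$ is affine with integer coefficients on the simplexes of a suitable rational (indeed regular, in the sense of Section \ref{section:presentation}) triangulation of $\cube$, and such a triangulation is computable from $\tau_i$. Taking a common refinement, and a regular subdivision if one wishes to stay inside the framework of \cite{mun11}, I would produce a single rational triangulation $\Delta$ of $\cube$ on each simplex of which all of $\hat\tau_1,\ldots,\hat\tau_k$, hence $F$ itself, restrict to affine maps with integer coefficients. On a simplex $\sigma\in\Delta$ I write $F|_\sigma(x)=M_\sigma x + b_\sigma$, with $M_\sigma,b_\sigma$ rational and effectively computable.

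Injectivity of $F$ is then tested simplexwise. For each ordered pair $(\sigma,\tau)$ of simplexes of $\Delta$ I form the collision set
\[
P_{\sigma,\tau} = \{(x,y)\in\sigma\times\tau : M_\sigma x + b_\sigma = M_\tau y + b_\tau\},
\]
a rational polytope in $\mathbb R^{2n}$ whose defining (in)equalities are computable from $\Delta$ and the terms. Because the closed simplexes of $\Delta$ cover $\cube$, the map $F$ fails to be injective if and only if some $P_{\sigma,\tau}$ contains a pair $(x,y)$ with $x\neq y$. To detect this I would, for every pair $(\sigma,\tau)$ and every coordinate $j\in\{1,\ldots,n\}$, maximize the linear functions $x_j-y_j$ and $y_j-x_j$ over $P_{\sigma,\tau}$ by exact rational linear programming; the pair yields a genuine collision precisely when one of these optima is strictly positive. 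The algorithm returns \textsf{YES} (the functions separate) exactly when all of these finitely many optima vanish, and \textsf{NO} otherwise; since linear programming over $\mathbb Q$ is decidable and $\Delta$ has finitely many simplexes, it terminates.

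The substantive content, as opposed to routine LP bookkeeping, is twofold: (a) the observation that separation is literally injectivity of $F$, which is what makes polyhedral methods applicable; and (b) the effective passage from the syntactic data $\tau_1,\ldots,\tau_k$ to an explicit rational triangulation $\Delta$ together with the integer affine pieces of $F$. I expect step (b) to be the main obstacle, and I would carry it out by induction on the construction of each MV-term, maintaining at each connective $\neg$ and $\oplus$ a rational triangulation subordinate to the function built so far and refining when necessary; the regular-triangulation techniques of \cite{mun11} guarantee that this is effective and that a common regular refinement exists. Everything downstream, including the handling of the open condition $x\neq y$ via the signed coordinate differences, is then elementary and exact over the rationals.
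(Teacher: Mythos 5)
Your proposal is correct. It shares the paper's two opening moves --- recasting separation as injectivity of the single $\mathbb Z$-map $g=(\hat\tau_1,\ldots,\hat\tau_k)\colon\cube\to\kube$, and effectively computing from the terms a rational triangulation $\Delta$ of $\cube$ on whose simplexes $g$ is linear with integer coefficients --- but your concluding decision test is genuinely different. The paper forms the image family $\Delta'=\{g(T)\mid T\in\Delta\}$ and checks two combinatorial conditions: that $\Delta'$ is itself a rational triangulation of the range $R$, and that $g$ maps the vertices of $\Delta$ one-to-one onto the vertices of $\Delta'$; injectivity of $g$ is then read off from this simplicial data. You instead test injectivity directly, by exact rational linear programming over the finitely many collision polytopes $P_{\sigma,\tau}\subseteq\mathbb R^{2n}$, declaring failure exactly when some $x_j-y_j$ or $y_j-x_j$ has strictly positive maximum. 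Your test is more elementary and its correctness is essentially self-contained: a non-diagonal point of some $P_{\sigma,\tau}$ is literally a pair of unseparated points, and compactness plus rationality make the optima attained and computable. The paper's conditions, by contrast, require the (routine but not entirely trivial) verification that a piecewise linear map which is injective on vertices and whose simplex images form a simplicial complex is globally injective; what that route buys is the explicit image triangulation $\Delta'$ of $R$, which is reused in later arguments (e.g.\ Theorems \ref{theorem:decidable-sw} and \ref{theorem:effective}). One cosmetic point: when $P_{\sigma,\tau}$ is empty the linear programs are infeasible rather than having vanishing optima, so your acceptance criterion should read ``no optimum is strictly positive'' rather than ``all optima vanish''.
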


  \begin{proof} 
%
Let
  $g
=(\hat\tau_1,\ldots,\hat\tau_k)\colon\cube\to\kube$
  be defined by $g(x)=(\hat\tau_1(x),\dots,\hat\tau_k(x))$ for all
  $x \in \cube$.

By \cite[3.4]{mun11}, the range  $R$ of $g$ is
a rational polyhedron  in $\kube$.
The separation
problem equivalently asks if $g$ is one-to-one.
Equivalently, is $g$ is a 
piecewise linear homeomorphism of $\cube$  onto $R$?
Fix $i\in \{1,\dots,k\}$. By induction on the number of connectives
in the subterms of $\tau_i$ one can effectively list the linear pieces
$l_{i1},\dots,l_{it_i}$ of the piecewise linear function
$\hat\tau_i$.
 Since all these linear pieces   have integer coefficients,
the routine stratification argument of 
\cite[2.1]{mun11} yields
 (as the list of the sets of vertices of its simplexes)
 a rational polyhedral complex  $\Pi_i$  over $\cube$ such that
$\hat\tau_i$
 is linear on each simplex of $\Pi_i$.
As in  \cite[1.4]{sta}, we now subdivide
$\Pi_i$   into a rational triangulation
$\Delta_i$ without adding new vertices.   
By induction on  $k,$ we compute
 a rational  triangulation
$\Delta$ of $\cube$  which is a joint subdivision
of  $\Delta_1,\dots,\Delta_k$.  Thus   
$g$ is linear over $\Delta$. (Using the 
effective desingularization procedure of
 \cite[2.9--2.10]{mun11}  
 we can  even insist that  $\Delta$ is   regular). 
 %
 We next  write down the set  
 $ \Delta' =  \{g(T)\subseteq \I^k\mid T\in \Delta\}$
 as the list of the set of vertices of each 
  convex polyhedron $g(T)$.
The following two conditions are now routinely checked: 
\begin{itemize}
\item[(I)]    
  $ \Delta'$  is a rational triangulation (of  $R$), and
  
  \item[(II)]     $g$ maps the set of
  vertices of $\Delta$   one-to-one into the set of vertices of
%
%
%
%
$\Delta'$. (By definition of $\Delta'$,
$g$ maps vertices of $\Delta$ onto vertices of $\Delta'$.)
  \end{itemize}
 Indeed, the separation problem has a positive answer iff the
both (I) and (II)  are satisfied: 
This completes the proof of
the decidability of the separation problem.
\end{proof}

\medskip
An  MV-algebra $D$ is  said to be 
{\it projective}
\index{Projective}
if whenever $\psi\colon A\to B$ is
a surjective homomorphism and $\phi\colon D\to B$ is a homomorphism,
there is a homomorphism $\theta\colon D\to A$ such that 
$\phi= \psi\circ \theta$.
As is well known, $D$ is projective iff it is a {\it retract}  of
a free MV-algebra $F$: in other words,
 there is a homomorphism $\omega$
of $F$ onto $D$ and a one-to-one homomorphism  $\iota$ of $D$ into
$F$ such that the composite function  $\omega\circ \iota$
is the identity function on $D$. 

\medskip

A large class of finitely generated projective
MV-algebras,  and automatically,  of finitely generated
projective  unital $\ell$-groups,  can be effectively 
presented by their generators, 
combining  the following theorem 
with the techniques of Section \ref{section:method}
below:

\begin{theorem}
\label{theorem:projective}
Let   $A$ be a finitely generated  
subalgebra of the free MV-algebra
$\McNn$. Suppose $A$ is separating
(a decidable property, by Theorem 
  \ref{theorem:separation-is-decidable}).  Then $A$ is projective.
\end{theorem}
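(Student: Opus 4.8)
The plan is to pass, through the duality of Section~\ref{section:presentation}, from the algebraic statement to a retraction property of rational polyhedra, and then to build the retraction. Write $g=(\hat\tau_1,\ldots,\hat\tau_k)\colon\cube\to\kube$ for the tuple of generators of $A$. First I would observe that $A$ is exactly the image of the homomorphism $\McN(g)\colon\McN(\kube)\to\McNn$, $f\mapsto f\circ g$, since $\hat\tau_i=\pi_i\circ g$ and the coordinate functions $\pi_i$ freely generate $\McN(\kube)$. By the proof of Theorem~\ref{theorem:separation-is-decidable}, the separation hypothesis is equivalent to $g$ being one-to-one, and there the range $R=g(\cube)$ is shown to be a rational polyhedron in $\kube$. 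Being a one-to-one continuous map of the compact space $\cube$ into the Hausdorff space $\kube$, $g$ is a homeomorphism onto $R$; as it is moreover a $\mathbb Z$-map, $\McN(g)$ factors as the restriction epimorphism $\McN(\kube)\to\McN(R)$ followed by the injection $\McN(R)\to\McNn$, $h\mapsto h\circ g$ (with $g$ regarded as a map onto $R$). Hence $A\cong\McN(R)$.

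Next I would recall that a $D$ is projective iff it is a retract of a free MV-algebra, and that under $\McN$ the free algebras correspond to cubes. The restriction map $\McN(\kube)\to\McN(R)$ is onto and is dual to the inclusion $\jmath\colon R\hookrightarrow\kube$. By the contravariance of $\McN$, a $\mathbb Z$-map $\sigma\colon\kube\to R$ with $\sigma\circ\jmath=\mathrm{id}_R$ yields $\McN(\sigma)\colon\McN(R)\to\McN(\kube)$ splitting that epimorphism, because $\McN(\jmath)\circ\McN(\sigma)=\McN(\sigma\circ\jmath)=\mathrm{id}_{\McN(R)}$. Thus the theorem is equivalent to the purely geometric assertion that the range $R$ of a one-to-one $\mathbb Z$-map defined on a cube is a $\mathbb Z$-retract of the ambient cube $\kube$.

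The hard part is producing the integer-coefficient retraction $\sigma$. The naive guess $\sigma=g\circ(g')^{-1}$ fails: the PL inverse $(g')^{-1}\colon R\to\cube$ has in general only rational coefficients, precisely because an integral injective $g$ may carry a regular simplex to a non-regular one, so that $g$ restricted to a simplex is not unimodular and cannot be inverted over $\mathbb Z$. This is exactly the phenomenon that lets $A$ be projective without being free, and is the source of the non-free examples of Section~\ref{section:method}. To overcome it I would exploit two features of $R$: it is homeomorphic to the ball $\cube$, hence contractible and, after fixing a regular triangulation by the desingularization of \cite[2.9--2.10]{mun11}, collapsible; and it contains the denominator-one point $g(\mathbf 0)\in\{0,1\}^k$, since each $\hat\tau_i$ takes a value in $\{0,1\}$ at the origin. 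Anchored at this denominator-one base point, $\sigma$ can be assembled as a composite of elementary $\mathbb Z$-retractions of a regular triangulation of $\kube$ admitting a triangulation of $R$ as a subcomplex, collapsing $\kube$ onto $R$ while fixing $R$ pointwise; realizing every such collapse by an integer-coefficient map is the delicate point, and it is here that the retraction and projectivity criteria for rational polyhedra of \cite{cabmun-ja, cabmun-ccm} (together with \cite{mun11}) are invoked.

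Once $\sigma$ is constructed, $\McN(\sigma)$ splits the restriction epimorphism $\McN(\kube)\to\McN(R)$, exhibiting $\McN(R)\cong A$ as a retract of the free MV-algebra $\McN(\kube)$; therefore $A$ is projective, and the same conclusion transfers to the associated unital $\ell$-group through the equivalence $\Gamma$.
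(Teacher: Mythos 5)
Your proposal is correct and follows essentially the same route as the paper: both pass through the duality to reduce projectivity of $A\cong\McN(R)$ to a retraction property of $R=g(\cube)$ inside the ambient cube, observe that $R$ is a collapsible PL-ball (via Whitehead) containing a denominator-one point, and then invoke the projectivity criterion for rational polyhedra of \cite{cabmun-ccm} (the paper cites 6.1(III) there, after using \cite[4.10]{cab} to supply the denominator-one point and a \emph{strongly regular} triangulation of $R$ --- the one hypothesis of that criterion your sketch does not explicitly verify). Your direct observation that $g(\mathbf 0)\in\{0,1\}^k$ has denominator one is a slightly more elementary way to obtain that part of the hypotheses.
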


\begin{proof}  Let  $\{g_1,\ldots,g_k\}$ 
be a generating set of  $A$. As already noted, the separation
hypothesis means that
 the   map    
 $g\colon \cube\to [0,1]^k$
defined by $g(x)=(g_1(x),\ldots,g_k(x)),$ $\,\,(x\in\cube)$ is one-to-one.
Since $g$ is continuous,
  $g$ is a homeomorphism of
$\cube$ onto its range  $R \subseteq [0,1]^k$.  
Further,   $g$ is piecewise linear and each linear piece of
$g$ is a polynomial  with integer coefficients.
Thus, $g$   is a $\mathbb Z$-map.
By  \cite[3.4]{mun11}, 
$R$ is a rational polyhedron. Further:

\begin{itemize}
\item[(i)]
The     piecewise linearity  of $g$  
yields a  triangulation  $\Delta_g$
  of $\cube\,\,$ such that   
$g$ is     linear over every simplex of
$\Delta_g$.  Since  $g$ is   a homeomorphism,
the set $g(\Delta_g)=\{g(T)\mid T\in \Delta_g\}$ is
a rational triangulation of $R$,  making $R$ into
what is known as
  an $n$-dimensional {\it PL-ball. } A classical result of
Whitehead  \cite{whi}  
shows that $R$ has a collapsible triangulation  $\nabla$.

\smallskip

\item[(ii)] 
By   \cite[4.10]{cab}, the polyhedron
 $R=g(\cube)$ has a point of denominator $1$,
  and  has a {\it strongly regular} triangulation  $\Delta$,
  in the sense that    $\Delta$ is regular and
for every maximal simplex $M$ of $\Delta$  the 
greatest common divisor  of the denominators of the
vertices of $M$ is equal to 1.

%
 \end{itemize}
 
By 
\cite[6.1(III)]{cabmun-ccm},
 the MV-algebra $\McN(R)$  is projective.
By \cite[3.6]{mun11},  $A$ is isomorphic
 to $\McN(R)$, whence the desired conclusion follows. 
\end{proof}

We refer to  \cite{mun86} and \cite{cigdotmun} for background on
 unital $\ell$-groups and their
  categorical equivalence $\Gamma$  with MV-algebras.
In particular,  \cite[4.16]{mun86} 
deals with  the freeness properties of
the unital
$\ell$-group  $\McN_{\rm group}(\cube)$ of 
all (continuous) piecewise linear
functions   $f\colon \cube\to \mathbb R$ where each linear
 piece  of $f$  has integer
coefficients, and with the constant function 1 as
the distinguished order unit.  An equivalent definition of 
$\McN_{\rm group}(\cube)$ is given by
\begin{equation}
\label{equation:gamma}
\Gamma(\McN_{\rm group}(\cube))=\McNn.
\end{equation}
Projective unital $\ell$-groups are the main concern of \cite{cabmun-ja,
cabmun-ccm}.
From the foregoing theorem we immediately have:

\begin{corollary} If  $(G,u)$ is a   finitely generated
 unital $\ell$-subgroup   of 
$\McN_{\rm group}(\cube)$, and  is
{\rm separating}, in the sense that
for each $x\neq y\in\cube$ there exists $f\in G$ such that $f(x)\neq f(y)$,
then  $(G,u)$  is projective. \hfill{$\Box$}
\end{corollary}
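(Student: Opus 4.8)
The plan is to leverage the categorical equivalence $\Gamma$ to transfer the MV-algebraic Theorem \ref{theorem:projective} to the setting of unital $\ell$-groups. The key observation is that projectivity is a categorical property defined purely in terms of surjective homomorphisms and lifting diagrams, so it is automatically preserved by any categorical equivalence. Since $\Gamma$ is such an equivalence between unital $\ell$-groups and MV-algebras, an object $(G,u)$ is projective in the category of unital $\ell$-groups if and only if $\Gamma(G,u)$ is projective in the category of MV-algebras.

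First I would make the translation explicit. Given a finitely generated separating unital $\ell$-subgroup $(G,u)$ of $\McN_{\rm group}(\cube)$, I apply $\Gamma$ to obtain the MV-algebra $A = \Gamma(G,u)$. Using the identity $\Gamma(\McN_{\rm group}(\cube)) = \McNn$ from \eqref{equation:gamma}, together with the fact that $\Gamma$ is a functor preserving subobjects, $A$ is a subalgebra of $\McNn$. I would then verify that $A$ is finitely generated as an MV-algebra: since $\Gamma$ is an equivalence and $(G,u)$ is finitely generated, the corresponding MV-algebra is finitely generated as well. Next I would check that the separation hypothesis transfers. The elements of $G$ and the elements of $A$ generate the same point-separation: the unit interval $[0,1]$-valued functions in $A$ are precisely obtained from the elements of $G$ under $\Gamma$, and separating for $G$ on $\cube$ translates to separating for $A$ on $\cube$, because two points are distinguished by some $f \in G$ precisely when they are distinguished by the corresponding MV-algebraic element.

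Having established that $A$ is a finitely generated separating subalgebra of $\McNn$, Theorem \ref{theorem:projective} immediately gives that $A$ is a projective MV-algebra. Applying the inverse equivalence $\Gamma^{-1}$, and using the fact that equivalences of categories preserve projectivity, I conclude that $(G,u) \cong \Gamma^{-1}(A)$ is a projective unital $\ell$-group.

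The step I expect to require the most care is the precise transfer of the separation property across $\Gamma$. The subtlety is that $\Gamma$ sends an MV-algebra $A$ to the unit interval of the corresponding unital $\ell$-group, so one must argue carefully that point-separation of $\cube$ by the $[0,1]$-valued functions of $G$ (equivalently, by the order-unit-bounded part of $G$) coincides with point-separation by all of $G$, and that this matches the MV-algebraic notion. This is essentially routine once one recalls that under $\Gamma$ the MV-algebra $A$ consists exactly of those functions in $G$ with values in $[0,1]$, and that separating by $G$ is equivalent to separating by this bounded part; but it is the only place where the argument is more than a formal invocation of categorical equivalence, so I would spell it out rather than treat it as automatic.
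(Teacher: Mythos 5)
Your proposal is correct and follows exactly the route the paper intends: the paper derives this corollary from Theorem \ref{theorem:projective} via the equivalence $\Gamma$ and the identity $\Gamma(\McN_{\rm group}(\cube)) = \McNn$, treating the transfer as immediate. Your extra care about why separation by all of $G$ coincides with separation by its unit interval (truncating $f^{+}$ and $f^{-}$ into unit-interval pieces) is a sound filling-in of a detail the paper leaves tacit.
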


\section{An MV-algebraic Stone-Weierstrass theorem}
Let $P\subseteq\cube$ be a rational polyhedron and  
$\{g_1,\ldots,g_k\}$   a generating set of a subalgebra $A$
%
%
%
of $\McN(P)$. Under which conditions does $A$ coincide with 
$\McN(P)$?  

One obvious
{\it necessary} condition is that $A$ be isomorphic to
$\McN(P)$---but this condition alone is {\it not sufficient}:
for instance, 
by \cite[3.6]{mun11},  the subalgebra of
$\McN([0,1])$ generated by $x\oplus x$ is isomorphic to 
$\McN([0,1])$   but does not coincide with it,
because the points  $1/2$  and $1$ are not separated by $A,$
but are separated by the identity function  $x\in \McN(\I)$.

Another {\it necessary} condition is given by observing
that  $A$ must separate points of $P$. Again, this 
condition alone is   {\it  not sufficient} for $A$ to coincide with
$\McN(P)$, as the following example shows:

\begin{example}
\label{example:schauder} 
{\rm    Let  $A$ be the subalgebra of  
the free one-generator MV-algebra $\McN([0,1])$
generated by the two elements  $x\odot x$ and $\neg(x\oplus x)$.
See the picture below.
It is easy to see that $A$ is separating. However, $A$
does not coincide with $\McN([0,1])$: no  function  $f\in A$ satifies
$f(1/2)=1/2$, while  the McNaughton function 
$x\in \McN([0,1])$
does.}
\end{example}

\vspace{0.3cm}                                                             
\unitlength0.8cm
\begin{picture}(5,4)
\multiput(2.5,0)(5,0){2}{\line(1,0){4}}  
\multiput(2.5,0)(5,0){2}{\line(0,1){4}}
\multiput(2.5,4)(5,0){2}{\line(1,0){4}}
\multiput(6.5,0)(5,0){2}{\line(0,1){4}}
\thicklines
\put(2.5,4){\line(1,-2){2}} 
\put(4.5,0){\line(1,0){2}}
\put(3.5,2){$\neg (x\oplus x)$}
\put(9.5,0){\line(1,2){2}} 
\put(7.5,0){\line(1,0){2}}
\put(9.5,2){$x\odot x$}
\end{picture} 
\bigskip

\bigskip

While  individually taken, separation and isomorphism are necessary
but not sufficient conditions  for a subalgebra $A$ of 
$\McN(P)$ 
to
coincide with $\McN(P)$,  putting these two conditions together,
we will obtain in Theorem \ref{theorem:sw}
an  MV-algebraic variant of the Stone-Weierstrass theorem. 

To this purpose, let us agree to say that 
 a subalgebra $A$ of an 
MV-algebra $B$ is   an {\it epi-subalgebra} if the inclusion map is an {\it epi-homomorphism}. Stated otherwise, 
  for any two homomorphisms $h,g\colon B\to C$, if
$h\restrict A=g\restrict A$ then  
 $h=g$. We then have:

\begin{lemma}\label{Lem:SepEpi}
 Let $P\subseteq [0,1]^n$ be a rational polyhedron and $A$ a finitely generated subalgebra of $\McN(P)$. Then the following 
 conditions are equivalent:
 
\begin{itemize}
\item[(i)] $A$ is a separating subalgebra of $\McN(P)$;

\item[(ii)] $A$ is an epi-subalgebra of $\McN(P)$.
\end{itemize} 
\end{lemma}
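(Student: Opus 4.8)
The plan is to exploit the duality $\McN$ between finitely presented MV-algebras and rational polyhedra. Since $A$ is a finitely generated subalgebra of $\McN(P)$, it is finitely presented, so by \cite[3.6]{mun11} the inclusion $\iota\colon A\hookrightarrow\McN(P)$ is dual to a $\mathbb Z$-map $g\colon P\to Q$, where $Q\subseteq\kube$ is a rational polyhedron with $\McN(Q)\cong A$. Concretely, if $\{g_1,\ldots,g_k\}$ generates $A$, then $g=(g_1,\ldots,g_k)$ and $Q$ is its range $R$. The key translation is this: under the duality, the inclusion being an \emph{epimorphism} in the category of MV-algebras corresponds to the $\mathbb Z$-map $g$ being a \emph{monomorphism} in the category of rational polyhedra with $\mathbb Z$-maps; and separation of $A$ says exactly that $g$ is injective as a set map. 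So the whole lemma reduces to showing, for a $\mathbb Z$-map $g\colon P\to R$ onto its range, that $g$ is injective if and only if $\iota=\McN(g)$ is epi.

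The direction (i)$\Rightarrow$(ii) is the easier one. If $g$ is injective then, being a continuous injection of the compact space $P$ onto $R$, it is a homeomorphism of $P$ onto $R$; since both $g$ and (by piecewise linearity and compactness) $g^{-1}$ are $\mathbb Z$-maps, $g$ is a $\mathbb Z$-homeomorphism, i.e.\ $P\cong_\mathbb Z R$. Then $\McN(g)$ is an \emph{isomorphism} of $\McN(R)$ onto $\McN(P)$, and an isomorphism is certainly epi; composing with the dual isomorphism $A\cong\McN(R)$ shows the inclusion is epi.

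The substantive direction is (ii)$\Rightarrow$(i), for which I would argue by contraposition: assume $g$ is not injective and produce two distinct homomorphisms $h_1,h_2\colon\McN(P)\to C$ agreeing on $A$. If $g(x)=g(y)$ with $x\neq y$ in $P$, the natural candidates for $C$, $h_1$, $h_2$ come from evaluating at the two points. The cleanest way is to pass through the points themselves: a point $x\in P$ of denominator $d$ determines a homomorphism $\McN(P)\to\mathbb Z[1/d]/\mathbb Z\subseteq\mathbb Q/\mathbb Z$-type codomain (more precisely, evaluation at $x$ gives a homomorphism into the MV-chain of values it generates), and the constraint $g(x)=g(y)$ forces these two evaluations to agree on all of $A=\langle g_1,\ldots,g_k\rangle$, since each $g_i(x)=g_i(y)$. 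Because $x\neq y$ and $\McN(P)$ separates the points of $P$ (the restrictions of coordinate functions already do), some element of $\McN(P)$ takes different values at $x$ and $y$, so the two evaluations differ on $\McN(P)$. Taking $C$ to be a common target containing both value-sets (e.g.\ a suitable MV-subalgebra of $\I$ into which both $\mathbb Z[1/\mathrm{den}(x)]$ and $\mathbb Z[1/\mathrm{den}(y)]$ embed), we obtain two homomorphisms agreeing on $A$ but not on $\McN(P)$, contradicting (ii).

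The main obstacle I anticipate is arranging a \emph{single} codomain $C$ and a genuinely MV-algebraic pair of homomorphisms realizing the two evaluations, rather than mere maps of sets: one must ensure the two evaluation homomorphisms land in a common MV-algebra so that ``$h_1\restrict A=h_2\restrict A$ but $h_1\neq h_2$'' is a statement about honest MV-homomorphisms. A slick alternative that sidesteps the point-chasing is to stay entirely inside the duality: epimorphisms of finitely presented MV-algebras correspond exactly to $\mathbb Z$-maps that are \emph{surjective onto} their image in the sense of being epi in the polyhedral category, and for $\mathbb Z$-maps between rational polyhedra one checks that categorical monos are precisely the injective maps; then (ii) directly forces $g$ injective, which is (i). I would likely present the duality-based argument as the main line and use the explicit two-point evaluation construction as the concrete witness justifying the categorical step.
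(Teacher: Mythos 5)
Your overall strategy---translate through the duality $\McN$ so that ``the inclusion $A\hookrightarrow\McN(P)$ is epi'' becomes ``$g=(g_1,\ldots,g_k)$ is a mono $\mathbb Z$-map,'' and then match monos with injective maps---is exactly the paper's proof, which does this in a few lines by citing \cite[Theorem 3.2]{cab} for the equivalence ``one-to-one iff mono.'' Your evaluation-homomorphism argument for (ii)$\Rightarrow$(i) is a sound, self-contained substitute for that citation, and your worry about a common codomain is a non-issue: evaluation at a point of $P$ is already an MV-homomorphism into the standard MV-algebra $[0,1]$, so one may simply take $C=[0,1]$.

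However, your argument for (i)$\Rightarrow$(ii) contains a genuine error. You claim that if $g$ is injective then $g^{-1}$ is a $\mathbb Z$-map ``by piecewise linearity and compactness,'' so that $g$ is a $\mathbb Z$-homeomorphism onto its range $R$ and $\McN(g)$ is an isomorphism onto $\McN(P)$. This is false: the inverse of an injective $\mathbb Z$-map is piecewise linear with \emph{rational}, not necessarily integer, coefficients. Were your claim true, $\McN(g)$ would be onto $\McN(P)$, hence every finitely generated separating subalgebra would coincide with $\McN(P)$---contradicting Example \ref{example:schauder}, where $x\odot x$ and $\neg(x\oplus x)$ generate a separating \emph{proper} subalgebra of $\McN([0,1])$ (the inverse of the corresponding $g$ has linear pieces with denominator $2$), and trivializing Theorem \ref{theorem:sw}, whose entire point is that separation alone does not force $A=\McN(P)$. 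The direction you want is nevertheless easy to repair without this false step: an injective $\mathbb Z$-map is a monomorphism in the polyhedral category for purely set-theoretic reasons, and under the contravariant equivalence $\McN$ a mono dualizes to an epi; no claim about $g^{-1}$ is needed. With that repair (and discarding the garbled sentence asserting that epimorphisms of MV-algebras correspond to ``epi'' $\mathbb Z$-maps---they correspond to monos), your proof coincides with the paper's.
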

\begin{proof}
Let $g_1,\ldots,g_n\in \McN(P)$ be a set of generators for $A$.
Then  $A$ is separating iff
the map $g=(g_1,\ldots,g_n)\colon P\to[0,1]^n$ is one-to-one.
By \cite[Theorem 3.2]{cab}, $g$ is one-to-one
iff  $g$ is a mono $\mathbb{Z}$-map.
Recalling  \eqref{equation:functor},
this latter condition is  equivalent to stating that
  the map   
 $\McN(g)\colon \McN([0,1]^n)\to \McN(P)$
  is an epi-homomorphism.
Equivalently,  the range $A$ of  
$\McN(g)$ is an epi-subalgebra of~$\McN(P)$.
\end{proof}
 
\begin{lemma}\label{Lemma_iso}
Let $P$ and $Q$ be rational polyhedra in
$\cube$  and $\eta\colon P\to Q$   a one-to-one $\mathbb Z$-map. Then the following are equivalent:
\begin{itemize}
\item[(i)] $P$ is $\,\,\mathbb Z$-homeomorphic to
$\eta(P)$;
\item[(ii)] $\eta$ is a $\,\mathbb Z$-homeomorphism 
of  $P$ onto $\eta(P)$.
\end{itemize} 
\end{lemma}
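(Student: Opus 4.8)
The implication (ii)$\Rightarrow$(i) is immediate: if $\eta$ is a $\mathbb Z$-homeomorphism of $P$ onto $\eta(P)$, then $\eta$ itself witnesses $P\cong_{\mathbb Z}\eta(P)$. For (i)$\Rightarrow$(ii) the plan is as follows. Since $P$ is compact and $\eta$ is a continuous one-to-one map onto $\eta(P)$, it is automatically a homeomorphism onto its image; and being piecewise linear with integer linear pieces, $\eta$ is a $\mathbb Z$-map. Thus the only point at issue is whether the inverse $\eta^{-1}\colon\eta(P)\to P$ is again a $\mathbb Z$-map.

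The key idea is to reduce to the case of a self-map. By hypothesis (i) there is a $\mathbb Z$-homeomorphism $h\colon\eta(P)\to P$, and the composite $\sigma=h\circ\eta\colon P\to P$ is then a bijective $\mathbb Z$-map of $P$ onto itself. Granting the sub-lemma below---\emph{every bijective $\mathbb Z$-map $\sigma\colon P\to P$ is a $\mathbb Z$-homeomorphism}---we recover $\eta=h^{-1}\circ\sigma$ as a composite of the $\mathbb Z$-homeomorphisms $h^{-1}$ and $\sigma$, whence $\eta$ is a $\mathbb Z$-homeomorphism of $P$ onto $\eta(P)$. It is exactly here that (i) is used: for a bijective $\mathbb Z$-map between \emph{different} polyhedra the conclusion can fail, and (i) is precisely what lets us form a self-map, equalizing the two volumes used below.

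To prove the sub-lemma, fix a regular triangulation $\Delta$ of $P$ over which $\sigma$ is affine linear, obtained by the stratification and desingularization procedures of \cite[2.9--2.10]{mun11} (subdividing keeps $\sigma$ linear on the smaller simplices). Since $\sigma$ is a homeomorphism of $P$ onto $P$, the images $\sigma(\Delta)=\{\sigma(T)\mid T\in\Delta\}$ form a triangulation of $P$ with pairwise disjoint open simplices. On each maximal simplex $T$, the restriction of $\sigma$ is integer affine with nonsingular linear part $M_T$ (nonsingular because $\sigma$ is one-to-one and $\dim T=\dim P$); computing inside the affine hull of $T$ with the lattice supplied by the homogeneous correspondents, the index $k_T=|\det M_T|$ is a positive integer and the lattice-normalized volume scales as $\mathrm{Vol}(\sigma(T))=k_T\,\mathrm{Vol}(T)$. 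Additivity of $\mathrm{Vol}$ over the two triangulations gives $\sum_T k_T\,\mathrm{Vol}(T)=\mathrm{Vol}(P)=\sum_T\mathrm{Vol}(T)$, hence $\sum_T (k_T-1)\,\mathrm{Vol}(T)=0$. As every summand is nonnegative, $k_T=1$ for all maximal $T$, so each $M_T$ is unimodular, has an integral inverse, and $\sigma^{-1}$ is affine linear with integer coefficients on each $\sigma(T)$. Therefore $\sigma^{-1}$ is a $\mathbb Z$-map and $\sigma$ is a $\mathbb Z$-homeomorphism.

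The main obstacle is making the volume bookkeeping rigorous when $P$ is not full-dimensional: one must read $\mathrm{Vol}$ and $\det M_T$ inside the affine hull of each maximal simplex, relative to the lattice determined by the homogeneous correspondents, so that $k_T=|\det M_T|$ is genuinely a positive integer; this is where the regularity of $\Delta$ is convenient. A purely arithmetic variant avoids volumes altogether: as an integer affine $\mathbb Z$-map, $\sigma$ sends $P\cap\frac{1}{d}\mathbb Z^n$ injectively into itself for every $d\geq1$, hence permutes this finite set, and comparing the leading ($d^{\dim P}$) Ehrhart terms over $\Delta$ and $\sigma(\Delta)$ forces $k_T=1$ as before. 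Either way, once $k_T=1$ the integrality of $\eta^{-1}$ follows, completing the proof.
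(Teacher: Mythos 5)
Your reduction is exactly the paper's: compose $\eta$ with the $\mathbb Z$-homeomorphism $h\colon\eta(P)\to P$ supplied by (i) to obtain a one-to-one $\mathbb Z$-map $\sigma=h\circ\eta$ of $P$ into itself, show that $\sigma$ is a $\mathbb Z$-homeomorphism, and recover $\eta=h^{-1}\circ\sigma$. The only difference is that where you prove your sub-lemma by hand, the paper simply cites \cite[Theorem 3.6]{cab}, which states precisely that every one-to-one $\mathbb Z$-map of a rational polyhedron into itself is a $\mathbb Z$-homeomorphism (so the paper does not even need surjectivity of $\sigma$). Your volume argument is essentially a from-scratch proof of that cited result; it is sound in spirit, but as written it has two soft spots that you partly acknowledge: maximal simplexes of $\Delta$ need not all have dimension $\dim P$ when $P$ is not pure-dimensional (fixable by stratifying by local dimension, which the homeomorphism $\sigma$ preserves, and running the count in each dimension separately), and the integrality of $k_T$ for lower-dimensional simplexes genuinely requires working with the lattice of homogeneous correspondents in $\mathbb Z^{n+1}$ together with the fact that $\den(\sigma(v))$ divides $\den(v)$ --- Euclidean volume in the affine hull will not do, as the ratio of Euclidean volumes of $T$ and $\sigma(T)$ need not be an integer. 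Your arithmetic variant closes off most cleanly and avoids all of this: since $\sigma$ maps the finite set $P_d=\{x\in P\mid \den(x)\ \text{divides}\ d\}$ injectively into itself, it permutes $P_d$ for every $d\geq 1$; if some $\den(\sigma(x))$ were a proper divisor $e$ of $\den(x)$, then $\sigma$ would inject $P_e\cup\{x\}$ into $P_e$, contradicting finiteness; hence $\sigma$ preserves denominators and is a $\mathbb Z$-homeomorphism by \cite[3.15(i)$\leftrightarrow$(iii)]{mun11}, with no volume bookkeeping at all.
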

\begin{proof}
For the nontrivial direction, let 
  $\gamma\colon \eta(P)\to P$  be
   a $\mathbb Z$-ho\-m\-e\-o\-mor\-ph\-ism. 
   Then $\gamma\circ \eta$ is a one-to-one $\mathbb Z$-map from $P$ into $P$. By \cite[Theorem 3.6]{cab}, $\gamma\circ \eta$ is a $\mathbb Z$-homeomorphism. 
   It follows that
    $\eta=\gamma^{-1}\circ(\gamma\circ \eta)$ is a $\mathbb Z$-homeomorphism.
\end{proof}
We are now ready to prove the main result of this section:

\begin{theorem}
\label{theorem:sw}  Let $P\subseteq [0,1]^n$ be a rational polyhedron and $A$ a subalgebra of $\McN(P)$. Then the following
conditions are equivalent:
\begin{itemize}
\item[(i)] $A=\McN(P)$;
\item[(ii)] $A$ is isomorphic to $\McN(P)$ and 
 is a separating subalgebra of $\McN(P)$;
\item[(iii)] $A$ is isomorphic to $\McN(P)$ and  is an epi-subalgebra of $\McN(P)$.
\end{itemize} 
\end{theorem}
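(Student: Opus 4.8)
The plan is to handle the two cheap equivalences first and concentrate all the work on the single implication $(ii)\Rightarrow(i)$. Note first that whenever any of the three conditions holds, $A$ is finitely generated: under $(i)$ this is clear since $\McN(P)$ is finitely presented, and under $(ii)$ or $(iii)$ it follows from $A\cong\McN(P)$. Hence Lemma~\ref{Lem:SepEpi} applies and yields at once the equivalence of the separation and epi-subalgebra clauses, so $(ii)\Leftrightarrow(iii)$. The implication $(i)\Rightarrow(ii)$ is also immediate: $A=\McN(P)$ trivially gives $A\cong\McN(P)$, and $\McN(P)$ separates points of $P$ because it contains the restrictions of the coordinate functions $\pi_1,\dots,\pi_n$, which already distinguish any two distinct points of $P\subseteq\cube$. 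It therefore remains to prove $(ii)\Rightarrow(i)$.

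For this, fix a generating set $g_1,\dots,g_k$ of $A$ (available since $A$ is finitely generated) and form the map $g=(g_1,\dots,g_k)\colon P\to\kube$. Since each $g_j\in\McN(P)$, the map $g$ is a $\mathbb Z$-map, and separation means exactly that $g$ is one-to-one; by \cite[3.4]{mun11} its range $R=g(P)$ is a rational polyhedron in $\kube$. The dual homomorphism $\McN(g)\colon\McN(R)\to\McN(P)$, $f\mapsto f\circ g$, is injective (because $g$ is onto $R$), and its image is precisely the subalgebra generated by the images under $\McN(g)$ of the $k$ coordinate functions of $\kube$ restricted to $R$, namely by $g_1,\dots,g_k$; that image is $A$. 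Thus $A\cong\McN(R)$, as in \cite[3.6]{mun11}. Combining this with the hypothesis $A\cong\McN(P)$ gives $\McN(R)\cong\McN(P)$, and since $\McN$ is a duality (Section~\ref{section:presentation}) it reflects isomorphisms, so $P\cong_\mathbb Z R=g(P)$.

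The decisive step is now to feed $g$ into Lemma~\ref{Lemma_iso}: the map $g$ is a one-to-one $\mathbb Z$-map and $P$ is $\mathbb Z$-homeomorphic to $g(P)$, which is exactly hypothesis $(i)$ of that lemma (its proof only uses the self-map $\gamma\circ g\colon P\to P$, so the ambient dimensions $n$ and $k$ need not agree). Its conclusion $(ii)$ then upgrades $g$ to a $\mathbb Z$-homeomorphism of $P$ onto $R$. By the duality, $\McN(g)\colon\McN(R)\to\McN(P)$ is consequently an \emph{isomorphism}, hence surjective onto $\McN(P)$; but its image has already been identified with $A$, so $A=\McN(P)$, as desired.

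The crux — and the reason the isomorphism clause alone does not suffice, as witnessed by the subalgebra generated by $x\oplus x$ and by Example~\ref{example:schauder} — is precisely this last upgrade. An abstract isomorphism $A\cong\McN(P)$ only guarantees that $P$ and $R$ are $\mathbb Z$-homeomorphic via \emph{some} map, not that the specific comparison map $g$ is invertible. The content of Lemma~\ref{Lemma_iso}, which rests on the Hopfian-type result \cite[Theorem 3.6]{cab} that a one-to-one $\mathbb Z$-map of a rational polyhedron into itself is automatically a $\mathbb Z$-homeomorphism, is exactly what closes this gap. Beyond that, I expect the only point demanding care to be the bookkeeping verification that the image of $\McN(g)$ is $A$ itself rather than some larger subalgebra.
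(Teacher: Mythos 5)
Your proof is correct, and both it and the paper's argument ultimately rest on the same Hopfian-type result, \cite[Theorem 3.6]{cab}, but they package it differently. The paper never introduces generators or the range polyhedron: it composes the abstract isomorphism $e\colon\McN(P)\to A$ with the inclusion $\iota\colon A\to\McN(P)$, observes that $\iota\circ e$ is both mono and epi, dualizes it to a self-map $\eta\colon P\to P$, and applies \cite[Theorem 3.2]{cab} and \cite[Theorem 3.6]{cab} directly to $\eta$ to conclude that $\iota\circ e$ is an isomorphism and hence that $\iota$ is onto. You instead realize $A$ concretely as $\McN(R)$ via the generator map $g=(g_1,\dots,g_k)\colon P\to\kube$, use the duality to reflect $\McN(R)\cong\McN(P)$ into $P\cong_{\mathbb Z}R$, and then invoke Lemma~\ref{Lemma_iso} to upgrade the specific comparison map $g$ to a $\mathbb Z$-homeomorphism --- a lemma the paper proves but reserves for Theorem~\ref{theorem:decidable-sw} rather than using here. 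Your remark that Lemma~\ref{Lemma_iso} is stated for polyhedra in a common ambient cube, but that its proof only needs the self-map $\gamma\circ g\colon P\to P$ and so tolerates different ambient dimensions, is the one point that genuinely required checking, and you checked it correctly; the identification of the image of $\McN(g)$ with $A$ is also handled properly. The paper's route is marginally more economical (no generating set, no range polyhedron, no explicit appeal to the duality reflecting isomorphisms at the level of polyhedra), while yours makes the geometric content more visible --- namely that the obstruction is the invertibility of the particular comparison map $g$, not the mere existence of some $\mathbb Z$-homeomorphism --- and reuses Lemma~\ref{Lemma_iso} instead of re-deriving its content inline.
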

\begin{proof}  
(ii)$\Leftrightarrow$(iii) follows directly from Lemma
\ref{Lem:SepEpi}.
(i)$\Rightarrow$(ii) is trivial.
To prove (ii)$\Rightarrow$(i),
 let $e\colon \McN(P)\to A$ be an isomorphism,
  and $\iota \colon A\to \McN(P)$ be the inclusion map.
Since $e$ is bijective and $\iota$ is epi, 
then $\iota\circ e$ is epi. Similarly, 
 since $e$ and $\iota$ are one-to-one, 
 then so is $\iota\circ e$. 
 Let $\eta\colon P \to P $ be the unique 
 $\mathbb{Z}$-map such that 
 $\iota\circ e(f)=f\circ \eta$ for each 
 $f\in\McN(P)$. By \cite[Theorem 3.2]{cab}, $\eta$ is one-to-one and onto.  By \cite[Theorem 3.6]{cab}, 
 $\eta$ is a $\mathbb{Z}$-homeomorphism,
  that is, $\eta^{-1}$ is a $\mathbb{Z}$-map. 
Again with reference to \eqref{equation:functor}, the map 
$\McN(\eta^{-1})\colon \McN(P)\to A$ 
%
%
   is the
   inverse of $\iota\circ e$. As a consequence,
 $\iota$ is surjective and $A=\iota(\McN(P))=\McN(P)$.
\end{proof}

Recalling  \eqref{equation:gamma} we immediately
have:
\begin{corollary}
\label{corollary:sw}  A finitely generated separating
unital $\ell$-subgroup
of $\McN_{\rm group}(\cube)$ isomorphic to 
  $\McN_{\rm group}(\cube)$ coincides with  
$\McN_{\rm group}(\cube)$. \hfill{$\Box$}
\end{corollary}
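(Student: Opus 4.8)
The plan is to transport the statement across the categorical equivalence $\Gamma$ and then invoke Theorem~\ref{theorem:sw}. Let $(G,u)$ be a finitely generated separating unital $\ell$-subgroup of $\McN_{\rm group}(\cube)$ with $(G,u)\cong\McN_{\rm group}(\cube)$; being a unital $\ell$-subgroup, $u$ is the inherited order unit, namely the constant function $1$. Applying $\Gamma$ and recalling \eqref{equation:gamma}, the image $A=\Gamma(G,u)$ is the unit interval $\{f\in G\mid 0\le f\le u\}$, a subalgebra of $\Gamma(\McN_{\rm group}(\cube))=\McNn$. Since $\Gamma$ is a categorical equivalence it preserves isomorphisms, so $(G,u)\cong\McN_{\rm group}(\cube)$ yields $A\cong\McNn$.

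The one point requiring a small argument is that the separation hypothesis on $(G,u)$ descends to $A$. Here I would use that, $u$ being a strong unit, every element of $G$ lies in some interval $[-mu,mu]$ and hence, by the Riesz decomposition property of $\ell$-groups, is a $\mathbb Z$-linear combination of elements of the unit interval $A$. Consequently, if $x\ne y$ in $\cube$ and $f\in G$ satisfies $f(x)\ne f(y)$, writing $f$ as such a combination forces some $a\in A$ with $a(x)\ne a(y)$. Thus $A$ separates the points of $\cube$, i.e.\ $A$ is a separating subalgebra of $\McN(\cube)=\McNn$.

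With $P=\cube$, Theorem~\ref{theorem:sw} now applies: the subalgebra $A$ of $\McN(\cube)$ is isomorphic to $\McN(\cube)$ and separating, hence $A=\McN(\cube)=\McNn$. To lift this equality of unit intervals back to the groups, observe that the inclusion $\iota\colon(G,u)\hookrightarrow\McN_{\rm group}(\cube)$ is a morphism of unital $\ell$-groups whose image under $\Gamma$ is the inclusion $A\hookrightarrow\McNn$; since $A=\McNn$ this image is the identity. As an equivalence of categories $\Gamma$ reflects isomorphisms, so $\iota$ is an isomorphism, and being an inclusion it gives $G=\McN_{\rm group}(\cube)$. (Equivalently, since $u$ is a strong unit each of the two groups is generated by its unit interval, so the equality $A=\McNn$ of unit intervals already forces them to coincide.) The only genuinely non-formal step is the descent of separation to the unit interval; everything else is bookkeeping with the functor $\Gamma$.
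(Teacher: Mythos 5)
Your proposal is correct and follows the same route the paper intends: the paper derives the corollary ``immediately'' from Theorem~\ref{theorem:sw} via the equivalence $\Gamma$ and the identity $\Gamma(\McN_{\rm group}(\cube))=\McNn$, which is exactly your argument. The only detail you add beyond what the paper leaves implicit is the (correct) Riesz-decomposition argument showing that separation by $G$ descends to separation by its unit interval $A=\Gamma(G,u)$, and the converse bookkeeping that $A=\McNn$ forces $G=\McN_{\rm group}(\cube)$ because a unital $\ell$-group is generated as a group by its unit interval.
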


When  $P=\cube$, Theorem 
\ref{theorem:sw} has the following stronger form:

\begin{theorem}
\label{theorem:sw-plus}  Any $n$-generator 
separating subalgebra   $A$ 
of $\McNn$  
 (equivalently, any $n$-generator
  epi-subalgebra of $\McNn$)
coincides with $\McNn$.
\end{theorem}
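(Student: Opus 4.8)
The plan is to exploit the fact that the $n$-generator hypothesis forces the separating map to be a \emph{self-map} of the cube, to which the rigidity result \cite[Theorem 3.6]{cab} applies directly; surjectivity then comes for free, so that no isomorphism need be assumed.

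First I would fix a generating set $g_1,\ldots,g_n$ of $A$ and form the map $g=(g_1,\ldots,g_n)\colon\cube\to\cube$. As in the proofs of Theorems \ref{theorem:separation-is-decidable} and \ref{theorem:projective}, each $g_i$ is a McNaughton function, so $g$ is a $\mathbb Z$-map, and the separation hypothesis is equivalent to $g$ being one-to-one. The crucial observation is that, since $A$ is generated by \emph{exactly} $n$ elements, the codomain of $g$ is again $[0,1]^n=\cube$; thus $g$ is a one-to-one $\mathbb Z$-map from the rational polyhedron $\cube$ \emph{into itself}.

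This is precisely the situation governed by \cite[Theorem 3.6]{cab} (the same rigidity result invoked in Lemmas \ref{Lem:SepEpi} and \ref{Lemma_iso}): a one-to-one $\mathbb Z$-map of a rational polyhedron into itself is automatically a $\mathbb Z$-homeomorphism, hence surjective. Therefore $g$ maps $\cube$ \emph{onto} $\cube$. By the duality functor $\McN$ and formula \eqref{equation:functor}, the induced homomorphism $\McN(g)\colon\McNn\to\McNn$, $f\mapsto f\circ g$, is then an isomorphism. Since $\McN(g)(\pi_i)=\pi_i\circ g=g_i$ and the coordinate projections $\pi_1,\ldots,\pi_n$ generate $\McNn$, the range of $\McN(g)$ is exactly the subalgebra $A$; as $\McN(g)$ is onto, I conclude $A=\McNn$. (Alternatively, surjectivity of $g$ gives $g(\cube)=\cube$, whence $A\cong\McN(\cube)=\McNn$ by \cite[3.6]{mun11}, and Theorem \ref{theorem:sw}, (ii)$\Rightarrow$(i), finishes the argument; the direct route above is cleaner.)

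The main obstacle—indeed the whole content of this stronger form—is the surjectivity step. In the general Theorem \ref{theorem:sw} one must \emph{assume} $A\cong\McN(P)$, because the separating map $g\colon P\to[0,1]^k$ need not land in $P$ and so offers no self-map to which rigidity could apply. Here, by contrast, the requirement that $A$ be generated by exactly $n$ elements pins the codomain down to $\cube$, and \cite[Theorem 3.6]{cab} then supplies surjectivity—equivalently, the isomorphism $A\cong\McNn$—automatically. Finally, the parenthetical equivalence with the epi-subalgebra formulation is immediate from Lemma \ref{Lem:SepEpi}.
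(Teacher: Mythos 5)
Your argument is exactly the paper's: form the self-map $g=(g_1,\ldots,g_n)\colon\cube\to\cube$, note that separation makes it a one-to-one $\mathbb Z$-map, invoke \cite[Theorem 3.6]{cab} to get a $\mathbb Z$-homeomorphism, and conclude via $\McN(g)$ that $A=\McNn$. Correct, and the same route as the paper.
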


\begin{proof}    Let
 $\{g_1,\ldots,g_n\}$ be a generating
set of $A$ in $\McNn$.  The 
map  $g=(g_1,\ldots,g_n)\colon \cube\to \cube$
is  one-to-one. From  \cite[Theorem 3.6]{cab}
it follows that 
$g$ is a $\mathbb{Z}$-homeomorphism. With reference to \eqref{equation:functor},
the map 
$\McN(g)$ 
%
%
yields
 an isomorphism from $\McNn$ onto $\McNn$, whence
  $\McNn=A$. 
\end{proof}

\begin{problem} {\rm Prove or disprove}: {\it Any epi-subalgebra
$B$ of a semisimple MV-algebra C isomorphic to $C$ coincides with $C$.}
\end{problem}


\medskip
\begin{theorem}
\label{theorem:decidable-sw}
The following {\rm isomorphism problem} 
  is decidable:
  
\medskip
\noindent ${\mathsf{INSTANCE}:}$  MV-terms 
$\tau_1,\ldots,\tau_k$ in the variables $X_1,\ldots,X_n$
such that  the subalgebra  $A$ of $\McNn$ generated by
  $\hat\tau_1,\ldots,\hat\tau_k$  is  separating
  (a decidable condition, by
 Theorem  \ref{theorem:separation-is-decidable}).

\smallskip
\noindent 
${\mathsf{QUESTION}:}$  Is  $A$  isomorphic to   $\McNn$?  
  
\end{theorem}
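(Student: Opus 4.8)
The plan is to reduce the isomorphism question to testing whether one explicit, already-computed $\mathbb Z$-map is a $\mathbb Z$-homeomorphism, and then to observe that this test is a finite piece of linear algebra. First I would run the construction in the proof of Theorem~\ref{theorem:separation-is-decidable} on the input terms to produce the $\mathbb Z$-map $g=(\hat\tau_1,\ldots,\hat\tau_k)\colon\cube\to\kube$, together with a regular triangulation $\Delta$ of $\cube$ on which $g$ is linear, the rational polyhedron $R=g(\cube)$, and the triangulation $\Delta'=\{g(T)\mid T\in\Delta\}$ of $R$. Since $A$ is separating by hypothesis, $g$ is one-to-one; being also a $\mathbb Z$-map onto $R$, it follows exactly as in the proof of Theorem~\ref{theorem:projective} (via \cite[3.6]{mun11}) that $A\cong\McN(R)$. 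Consequently, by the duality recalled in Section~\ref{section:presentation}, $A\cong\McNn$ if and only if $\McN(R)\cong\McNn$, equivalently if and only if $R\cong_{\mathbb Z}\cube$.

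Next I would argue that $R\cong_{\mathbb Z}\cube$ can be decided by inspecting the single map $g$, rather than searching over all homeomorphisms. Reasoning as in Lemma~\ref{Lemma_iso} (whose proof uses only that the domain $\cube$ sits inside $\cube$, so that \cite[Theorem 3.6]{cab} applies to a self-map): if $\gamma\colon R\to\cube$ were any $\mathbb Z$-homeomorphism, then $\gamma\circ g\colon\cube\to\cube$ would be a one-to-one $\mathbb Z$-map, hence a $\mathbb Z$-homeomorphism by \cite[Theorem 3.6]{cab}, whence $g=\gamma^{-1}\circ(\gamma\circ g)$ would itself be a $\mathbb Z$-homeomorphism. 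Thus $R\cong_{\mathbb Z}\cube$ holds if and only if $g$ is a $\mathbb Z$-homeomorphism of $\cube$ onto $R$, equivalently if and only if $g^{-1}\colon R\to\cube$ is a $\mathbb Z$-map.

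Finally I would verify that the condition ``$g^{-1}$ is a $\mathbb Z$-map'' is effectively testable from the triangulation data. On each maximal simplex $T\in\Delta$ the restriction $g\restrict T$ is an affine $\mathbb Z$-map carrying $T$ bijectively onto the $n$-simplex $g(T)\in\Delta'$. I would test, by linear algebra over $\mathbb Q$ (equivalently, by checking the appropriate unimodularity conditions on the homogeneous correspondents of the vertices of $T$ and of $g(T)$), whether the affine inverse of $g\restrict T$ agrees on $g(T)$ with an affine map having integer coefficients. Since $g$ is a homeomorphism, these local inverses are precisely the linear pieces of the continuous map $g^{-1}$ and automatically agree on shared faces; hence $g^{-1}$ is a $\mathbb Z$-map exactly when every local test succeeds. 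The algorithm answers \textsf{YES} iff all tests succeed.

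The conceptual heart --- and the only step that is not bookkeeping --- is the reduction of the second paragraph. A priori, deciding the $\mathbb Z$-homeomorphism type of $R$ would require quantifying over the infinitely many candidate maps $R\to\cube$, and it is Lemma~\ref{Lemma_iso} together with \cite[Theorem 3.6]{cab} that collapses this to testing the one canonical map $g$ that we already hold. Once that reduction is in place, the remaining integrality check is routine and finite, so the isomorphism problem is decidable.
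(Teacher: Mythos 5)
Your proposal is correct and follows essentially the same route as the paper: reduce $A\cong\McNn$ to $R\cong_{\mathbb Z}\cube$ via $A\cong\McN(R)$ and duality, collapse the latter to the single canonical map $g$ being a $\mathbb Z$-homeomorphism via Lemma~\ref{Lemma_iso}, and then check this effectively on a regular triangulation on which $g$ is linear. The paper phrases the final test as ``$g(S)$ is regular for each $S\in\Delta$ and $\den(g(v))=\den(v)$ at each vertex'' (citing \cite[3.15]{mun11}), which is exactly the unimodularity-of-local-inverses condition you describe, so the two are the same decision procedure.
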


\begin{proof}  
 Let us write
$g
=(\hat\tau_1,\ldots,\hat\tau_k)\colon \cube\to\kube$.
By hypothesis $g$ is a homeomorphism.
Let 
   $R$ be the range of $g$.
%
%
As in the proof of Theorem~\ref{theorem:separation-is-decidable},
let $\Delta'$ be a rational  triangulation of 
$[0,1]^n$  such that  $g$ is linear on each simplex of $\Delta'$.
Using the desingularization procedure of \cite[2.8]{mun11}
(which is also found in  \cite[Theorem 9.1.2]{cigdotmun}), 
we compute a  regular
subdivision  $\Delta$  of $\Delta',$ by listing the sets
of vertices of its simplexes.
We  have the following equivalent conditions:

\smallskip
$\quad \,\,\, A\cong\McNn$  

\smallskip
 $\Leftrightarrow$ $\McN(R)\cong\McNn$,
(because $A\cong \McN(R)$, by \cite[3.6]{mun11})
 
 \smallskip
 $\Leftrightarrow$ 
 $R \cong_{\mathbb Z}\cube$,  (by duality,  \cite[3.10]{mun11})
\smallskip

$\Leftrightarrow$ 
 $g$ is  a $\mathbb Z$-homeomorphism of $\cube$
 onto $R$,    
(by Lemma~\ref{Lemma_iso})

\smallskip
  $\Leftrightarrow$     
$g(S)$ is regular for each $S\in\Delta$,
\,\,and\,\, $\den(g(r))=\den(r)$ for each vertex $v$ of $\Delta$.
This last equivalence follows because
the homeomorphism
  $g$ is a  $\mathbb Z$-map
  of $\cube$  onto $R$, 
(see \cite[3.15(i)$\leftrightarrow$(iii)]{mun11}).
\end{proof}

\smallskip

\begin{proposition}
\label{proposition:partial}
The following problem is decidable:

\smallskip
\noindent
${\mathsf{INSTANCE}:}$  MV-terms  $\tau_1,\ldots,\tau_k$ in the variables
$X_1,\ldots,X_n$.

\smallskip
\noindent
${\mathsf{QUESTION}:}$ Is the subalgebra  $A$ of 
$\McNn$ generated by  $\hat\tau_1,\ldots,\hat\tau_k$
free  and  separating?
\end{proposition}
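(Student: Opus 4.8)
The plan is to reduce the combined question to two separately decidable ones: separation, settled by Theorem~\ref{theorem:separation-is-decidable}, and isomorphism with $\McNn$, settled by Theorem~\ref{theorem:decidable-sw}. The bridge between them is the assertion that, \emph{once $A$ is known to be separating}, being free is equivalent to being isomorphic to $\McNn$ --- no free MV-algebra on a number of generators other than $n$ can arise.

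First I would run the separation procedure of Theorem~\ref{theorem:separation-is-decidable} on $\tau_1,\ldots,\tau_k$. If $A$ is not separating, the answer is negative and we stop; so assume $A$ is separating. Writing $g=(\hat\tau_1,\ldots,\hat\tau_k)\colon\cube\to\kube$, the separation hypothesis makes $g$ a homeomorphism of $\cube$ onto its range $R=g(\cube)$, a rational polyhedron with $A\cong\McN(R)$ by \cite[3.6]{mun11}.

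The key step is a dimension count. Suppose $A$ is free, say $A\cong\McN([0,1]^m)=\McNm$ for some $m$. Combining this with $A\cong\McN(R)$ and invoking the duality of Section~\ref{section:presentation}, we obtain a $\mathbb Z$-homeomorphism $R\cong_\mathbb Z[0,1]^m$; in particular $R$ is homeomorphic to $[0,1]^m$, whence $\dim R=m$. On the other hand $R$ is the homeomorphic image of the $n$-cube under $g$, so $\dim R=n$ by invariance of dimension. Hence $m=n$ and $A\cong\McNn$. The converse is immediate, since $\McNn$ is free. Thus, under the separation hypothesis, $A$ is free if and only if $A\cong\McNn$.

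Finally I would apply Theorem~\ref{theorem:decidable-sw}, which decides exactly whether a separating subalgebra is isomorphic to $\McNn$; its output is the answer to our problem. The only genuine content is the dimension argument pinning the number of free generators to $n$, which is precisely what makes the already-established decidability of the isomorphism problem applicable here. I expect no real obstacle beyond correctly justifying this dimension bound. (By Theorem~\ref{theorem:sw-plus} one sees moreover that such an $A$ must in fact coincide with $\McNn$, so the property tested is equivalent to $A=\McNn$.)
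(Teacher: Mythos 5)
Your proposal is correct and follows essentially the same route as the paper: both reduce the problem to the claim that, for a separating $A$, freeness is equivalent to $A\cong\McNn$, and then invoke Theorems~\ref{theorem:separation-is-decidable} and~\ref{theorem:decidable-sw}. The only (immaterial) difference is in justifying that claim: the paper compares maximal spectral spaces ($\mu(A)\cong R\cong\cube$ versus $\mu(\McN(\I^m))\cong\I^m$), while you pass through the polyhedral duality to get $R\cong_{\mathbb Z}[0,1]^m$; both arguments bottom out in the same invariance-of-dimension fact that $[0,1]^m\not\cong\cube$ for $m\neq n$.
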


\begin{proof}
 Again, let us write
$g
=(\hat\tau_1,\ldots,\hat\tau_k)\colon \cube\to\kube$.
Let $R$ be the range of $g$.
  We first {\it claim} that
$A$ is  separating and free 
iff  $A$ is separating and 
isomorphic to $\McNn$.  

For the nontrivial direction,  as repeatedly noted, since
  $A$ is separating
and ${A\cong \McN(R)}$  then  $g$ is a homeomorphism
of   $\cube$ onto   $R$. Since  by \cite[4.18]{mun11},
 $R$ is homeomorphic to 
 the maximal spectral space  $\mu(A)$ of $A$, then $\mu(A)$
 is homeomorphic to $\cube$. 
Further, for each  $m=1,2,\ldots,$
the maximal spectral space of the free $m$-generator
 MV-algebra  $\McN(\I^m)$
is homeomorphic to $\I^m$. As is well known, whenever
 $m\not=n$ the $m$-cube
$\I^m$  is not homeomorphic to $\cube$.
Since by hypothesis $A$ is free  and finitely generated,
 the only possibility for $A$ to be isomorphic to some
free MV-algebra  $ \McN(\I^m)$
is for $m=n$, which settles our claim.

To conclude the proof, by
 Theorem \ref{theorem:sw},  $A$ is free and separating
iff  
$A$ is  (separating and) equal to  $\McNn$.
This is decidable,  by Theorems \ref{theorem:separation-is-decidable}
and   \ref{theorem:decidable-sw}. 
\end{proof}

\begin{problem}{\rm
Prove or disprove the decidability of the following problems: 

\smallskip
\noindent
(a)  
${\mathsf{INSTANCE}:}$ MV-terms  $\tau_1,\ldots,\tau_k$ in the variables
$X_1,\ldots,X_n$.

$\,{\mathsf{QUESTION}:}$  Is the subalgebra  $A$ of
$\McNn$ generated by $\hat\tau_1,\ldots,\hat\tau_k$
free?

\medskip
\noindent
(b)
${\mathsf{INSTANCE}:}$  MV-terms  $\tau_1,\ldots,\tau_k$ in the variables
$X_1,\ldots,X_n$.

$\,{\mathsf{QUESTION}:}$  Is the subalgebra  $A$ of
$\McNn$ generated by $\hat\tau_1,\ldots,\hat\tau_k$
isomorphic to $\McNn$?
}
\end{problem}

\medskip
By a quirk of fate, replacing  isomorphism by equality
in Problem (b)   we have:

\smallskip 
\begin{proposition}
\label{proposition:urto} 
The following problem is decidable:

\smallskip
\noindent
${\mathsf{INSTANCE}:}$ MV-terms  $\tau_1,\ldots,\tau_k$ in the variables
$X_1,\ldots,X_n$.

\smallskip
\noindent
${\mathsf{QUESTION}:}$  Does the subalgebra  $A$ of
$\McNn$ generated by   $\hat\tau_1,\ldots,\hat\tau_k$
coincide with     $\McNn$?
\end{proposition}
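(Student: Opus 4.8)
The plan is to reduce the equality $A=\McNn$ to two separately decidable conditions, using the Stone-Weierstrass characterization already established. Since $\McNn=\McN(\cube)$ is the free $n$-generator MV-algebra, I would apply Theorem~\ref{theorem:sw} with $P=\cube$, obtaining the equivalence
\[
A=\McNn \iff \text{$A$ is separating and $A\cong\McNn$}.
\]
In this way the whole problem splits into an effective test for separation and an effective test for isomorphism, both of which are already at our disposal.

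First I would run the separation procedure of Theorem~\ref{theorem:separation-is-decidable} on the input $\tau_1,\ldots,\tau_k$. If the set $\{\hat\tau_1,\ldots,\hat\tau_k\}$ does not separate the points of $\cube$, then $A$ is not a separating subalgebra, so the equivalence above gives $A\neq\McNn$ and the algorithm halts with answer \textsf{NO}. If instead separation holds, the hypothesis of Theorem~\ref{theorem:decidable-sw} is met, and I would invoke that theorem to decide whether $A\cong\McNn$: a positive answer yields $A=\McNn$ by the displayed equivalence (halt with \textsf{YES}), while a negative answer yields $A\neq\McNn$ (halt with \textsf{NO}). As each subroutine terminates, so does the composite procedure, and it returns the correct answer in every case.

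I expect no genuine obstacle beyond correctly sequencing the two tests, since the substance of the proposition is carried entirely by Theorem~\ref{theorem:sw} together with the earlier decidability results. The single point requiring care is that Theorem~\ref{theorem:decidable-sw} presupposes separation, so the separation test must be performed first and the isomorphism test invoked only once separation has been confirmed. This is precisely the ``quirk of fate'' noted before the statement: whereas deciding $A\cong\McNn$ without further hypotheses is Problem~(b) and remains open, deciding $A=\McNn$ becomes tractable because equality forces separation, and separation is exactly the condition that makes the isomorphism test applicable.
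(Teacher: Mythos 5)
Your proof is correct and follows essentially the same route as the paper: both rest on Theorem~\ref{theorem:sw} to reduce $A=\McNn$ to the conjunction of separation and isomorphism, and then invoke Theorems~\ref{theorem:separation-is-decidable} and~\ref{theorem:decidable-sw} in that order. The only cosmetic difference is that the paper phrases the second condition as ``separating and free'' and cites Proposition~\ref{proposition:partial}, whose own proof reduces to exactly the two subroutines you call directly.
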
 
\begin{proof}
By Theorem \ref{theorem:sw}, 
$A=\McNn$
iff  
$A$ is separating and isomorphic to $\McNn$
iff   $A$ is separating and free, by  the claim
in the proof of Proposition \ref{proposition:partial}.
The latter conjunction of properties is decidable,
by the same proposition.
\end{proof}

\bigskip

\section{Subalgebras  of  $\McNn$ and rational triangulations of $\cube$}
\label{section:method}
In this section a method is introduced to write down a list
of MV-terms  $\tau_1,\ldots,\tau_k$  in the variables
$X_1,\ldots,X_n$ in such a way that the subalgebra   
 of $\McNn$ generated by the McNaughton functions
 $\hat\tau_1,\ldots,\hat\tau_k$ is
simultaneously {\it separating and  distinct} from $\McNn$.
Conversely,  every  finitely generated separating
proper 
%
%
 subalgebra  of $\McNn$ is obtainable by this method.

In combination with 
 Theorem \ref{theorem:projective}, a large class of
projective  MV-algebras can be effectively introduced by this
method.

The procedure starts with  a  rational triangulation
$\Delta$  of $\cube$ equipped with a set
 $\mathcal H$ of  functions  $f\in \McNn$, 
 called ``hats''. Each hat of  $\mathcal H$ is 
  pyramid-shaped  and  linear
 on each simplex of $\Delta$.  One then lets $A$ be the
algebra generated by $\mathcal H$. In more detail:

\begin{definition}
\label{definition:delta-basis}
A {\it weighted triangulation} of $\cube$ 
 is a pair $(\Delta,(a_1,\ldots,a_u)),$
where  $\Delta$  is a triangulation
of $\cube$  with  rational  vertices $v_1,\ldots,v_u$
and   their {\it associated} positive integers  
$\,\, a_1,\dots,a_u, $ where for each  $i=1,\dots,u$,
 $\,\,a_i$ is   a 
 divisor of $\den(v_i)$. We write    $(\Delta,{\bf a})$
as an abbreviation of $(\Delta,(a_1,\ldots,a_u))$.

 The  function 
$h_i\colon \cube\to \I$  which is linear on
every simplex of $\Delta$  and satisfies
  $h_i(v_i)=a_i/\den(v_i)$ and
 $h_i(v_j)=0$  for each $j\not= i$
 is called the $i$th  {\it   hat}   of~$(\Delta,{\bf a})$.

Given a weighted triangulation
$(\Delta, {\bf a})$ of $\cube$, the set of  its hats 
is denoted 
 $\mathcal H_{\Delta, {\bf a}}$.
If each linear piece of
every hat  $h_i\in \mathcal H_{\Delta, {\bf a}}$
  has  integer coefficients,
  (i.e.,  $h_i\in \McNn$), we say that 
  the set  $\mathcal H_{\Delta, {\bf a}}$ is  
  {\it basic}.
\end{definition}

\begin{lemma}
\label{lemma:pugliese}
{\rm (i)}  Let  $T$ be an   $n$-simplex  
with  rational vertices $w_0,\ldots,w_n\in \cube$.
For each  $i=0,\ldots,n\,$  let $\,l_i\colon T\to \I$  be the
  linear function  
 satisfying  $l_i(w_i)=1/\den(w_i)$ and
$l_i(w_j)=0$  for  $j\not=i$.  Then $T$ is regular
iff   $\,l_i$ has integer coefficients, for each
$i=0,\dots,n$.  

\smallskip
{\rm (ii)}  Let $\Delta$ be a  regular triangulation of $\cube$ with vertices
$v_1,\ldots,v_u$.  
%
%
%
%
%
%
%
For each  $i=1,\ldots,u$ let $a_i\geq 1$ be a 
 divisor  of   $\den(v_i)$.  
Let ${\bf a}=(a_1,\ldots,a_u)$.
 Then $(\Delta, {\bf a})$ is a weighted triangulation and  
 $\mathcal H_{\Delta, {\bf a}}$ is a basic set. 
 
 \smallskip
{\rm (iii)}
 There is an effective  (=Turing-computable) procedure to test if
a  weighted triangulation $(\Delta,{\bf a})$
of $\cube$   
determines a  basic set $\mathcal H_{\Delta, {\bf a}}$.
\end{lemma}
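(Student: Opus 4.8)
The plan is to treat part (i) as the crux and to derive (ii) and (iii) from it with little extra work. For (i) I would pass to homogeneous coordinates. Writing $d_j=\den(w_j)$, the homogeneous correspondent is $\tilde w_j=(d_j w_j, d_j)\in\mathbb Z^{n+1}$, and for an affine function $l_i(x)=\langle m_i,x\rangle+b_i$ I would introduce its homogenization, the linear functional $L_i(y_1,\dots,y_{n+1})=\langle m_i,(y_1,\dots,y_n)\rangle+b_iy_{n+1}$. The point is the identity $L_i(\tilde w_j)=d_j\,l_i(w_j)$, which together with the defining conditions $l_i(w_i)=1/d_i$ and $l_i(w_j)=0$ for $j\ne i$ gives $L_i(\tilde w_j)=\delta_{ij}$. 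Let $M$ be the integer matrix whose columns are $\tilde w_0,\dots,\tilde w_n$ and $N$ the (rational) matrix whose rows are the coefficient vectors $(m_i,b_i)$ of the $L_i$. The identity reads $NM=I$, so $N=M^{-1}$. Now $T$ is regular iff $\{\tilde w_0,\dots,\tilde w_n\}$ is a $\mathbb Z$-basis of $\mathbb Z^{n+1}$ (being $n+1$ vectors, they extend to a basis exactly when they already form one), i.e.\ iff $\det M=\pm1$. If $\det M=\pm1$ then $N=M^{-1}$ is an integer matrix by the adjugate formula, so every $l_i$ has integer coefficients; conversely, if every $l_i$ has integer coefficients then $N$ is integral and $NM=I$ forces $\det M\cdot\det N=1$ with both determinants in $\mathbb Z$, whence $\det M=\pm1$ and $T$ is regular.

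For (ii), the pair $(\Delta,{\bf a})$ is a weighted triangulation directly from the definition, since each $a_i$ is a positive integer dividing $\den(v_i)$; the content is that $\mathcal H_{\Delta,{\bf a}}$ is basic. I would argue simplex by simplex. Fix a simplex $T=\conv(w_0,\dots,w_n)$ of $\Delta$ and a hat $h_i$. If $v_i$ is not a vertex of $T$ then $h_i$ vanishes on all vertices of $T$, hence $h_i\restrict T\equiv 0$, trivially of integer coefficients. If $v_i=w_r$ is a vertex of $T$, then on $T$ the hat agrees with $a_i\,l_r$, where $l_r$ is the function of part (i) for $T$: both are linear on $T$ and take the same values $a_i/\den(v_i)$ at $w_r$ and $0$ at the remaining vertices. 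Since $\Delta$ is regular, the simplex $T$ is regular, so by (i) the function $l_r$ has integer coefficients; multiplying by the integer $a_i$ preserves this. Thus every linear piece of every hat has integer coefficients, and since each $h_i$ is by construction a continuous $\I$-valued piecewise linear function, $h_i\in\McNn$ and $\mathcal H_{\Delta,{\bf a}}$ is basic.

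For (iii) I would give the obvious finite algorithm and check its correctness. By the computation above, $\mathcal H_{\Delta,{\bf a}}$ is basic iff, for every simplex $T$ of $\Delta$ and every vertex $w_r=v_i$ of $T$, the linear piece $h_i\restrict T=a_i\,l_r$ has integer coefficients. For each such pair $(T,r)$ one computes $l_r$ by solving over $\mathbb Q$ the linear system fixing the $n+1$ prescribed vertex values, multiplies by $a_i$, and tests whether the resulting coefficient vector is integral. There are finitely many simplexes, each with $n+1$ vertices, so this is a finite sequence of rational linear-algebra computations, hence Turing-computable; the procedure reports ``basic'' precisely when all tests succeed. The only genuine work is part (i): once the homogenization identity $L_i(\tilde w_j)=\delta_{ij}$ and the duality $N=M^{-1}$ are in place, the equivalence is a one-line unimodularity argument and parts (ii) and (iii) follow as immediate corollaries.
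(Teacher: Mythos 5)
Your proposal is correct and follows essentially the same route as the paper: identifying the (homogenized) coefficient vectors of the $l_i$ with the inverse of the matrix of homogeneous correspondents and invoking unimodularity for (i), then deducing (ii) simplex by simplex and (iii) by a finite integrality check of the same rational matrices. Your write-up of the identity $L_i(\tilde w_j)=\delta_{ij}$ merely makes explicit a step the paper leaves to the reader.
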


\begin{proof} 
(i) Let $M$ be the  $(n+1)\times(n+1)$  matrix whose  $i$th
row consists of the integer coordinates of the homogeneous
correspondent  $\tilde{w_i}$ of $w_i$. 
Assume $T$ is not regular.
By definition,    $|\det(M)|\geq 2$.  The absolute value of the determinant
of the inverse matrix  $M^{-1}$   is a rational number lying
in the open interval $(0,1)$. So   $M^{-1}$ is not an integer matrix.
Since,  
 the $i$th column of  $M^{-1}$ yields the coefficients of
the linear function   $l_i\,,$  not all these functions can
have integer coefficients.
Conversely, if $T$ is regular then $M^{-1}$ is an integer matrix,
whose columns yield the coefficients of the linear functions $l_i,
\,\,\,i=1,\dots,n$.

(ii) Evidently, 
$(\Delta, {\bf a})$ is a weighted triangulation.
Fix an $n$-simplex    $T$   of
$\Delta$ with its vertices
$w_0,\ldots,w_n$ and corresponding linear functions
$l_0,\dots,l_w.$ 
By (i),  the coefficients of  each $l_i$ are integers,
and so are the coefficients  of
$a_il_i$.  Thus,   $\mathcal H_{\Delta, {\bf a}}$ 
is a basic set.

(iii)    For every $n$-simplex $T$  of $\Delta$
let $M_{T}$ be the $(n+1)\times(n+1)$
integer-valued matrices whose rows 
are the homogeneous correspondents
of the vertices  
of $T$. 
Let $D_T$ be the
$(n+1)\times(n+1)$ diagonal matrix
whose diagonal 
entries are given by the subsequence of
${\bf a}$   associated to
the vertices of $T$.
The rational matrix $M_T^{-1}D$ is effectively
computable from the input data   $(\Delta,{\bf a})$.
Arguing as in (i),  it is easy to see that
the set $\mathcal H_{\Delta, {\bf a}}$ 
is basic   iff 
$M_T^{-1}D$ is an integer matrix for each $T$.
\end{proof}

The following is an example
of a  basic set  $\mathcal H_{\Delta, {\bf a}}$
where the triangulation  $\Delta$  
is not  regular.

\begin{example} 
\label{example:continuation}
{\rm
Fix   an integer  $u\geq 3$, and let   $V=\{k/u\mid k=0,1,\ldots,u\}$.
Let $\Delta$  be the rational triangulation of $\I$
whose vertices are precisely those in $V$. Assume 
each vertex $v_k$  of $\Delta$ is associated to the integer 
$a_k=\den(v_k)$. 
We  then  have a  weighted triangulation  $(\Delta,{\bf a})$ of $\I$
and  $\mathcal H_{\Delta, {\bf a}}$ 
is a basic set.
For  $k=1,\ldots,u-1$, the
 hat $h_k$ of  $\mathcal H_{\Delta,{\bf a}}$ is a 
piecewise linear 
function with four linear pieces, connecting
the five points of the unit square  
$(0,0),((k-1)/u,0),(k/u,1),((k+1)/u,0),(1,0)$.
Each hat $h_k$ of $\mathcal H_{\Delta,{\bf a}}$ has
 value 1 at $v_k$.  In detail:
\[
h_k(x)=\begin{cases}
0&\mbox{if }\,\, 0\leq x<\frac{k-1}{u} \\
u x-(k-1)&\mbox{if }\,\, \frac{k-1}{u}\leq x<\frac{k}{u} \\  
-u x+k+1&\mbox{if }\,\, \frac{k}{u}\leq x<\frac{k+1}{u} \\  
0&\mbox{if }\,\, \frac{k+1}{u}\leq x\leq 1. \\
\end{cases}
\]
Further, 
\[
h_0(x)=\begin{cases}
-u x+1&\mbox{if }\,\, 0\leq x<\frac{1}{u} \\  
0&\mbox{if } \,\,\frac{k+1}{u}\leq x\leq 1. \\
\end{cases}\quad
h_u(x)=\begin{cases}
0&\mbox{if }\,\, 0\leq x<\frac{u-1}{u} \\
u x-(u-1)&\mbox{if }\,\, \frac{u-1}{u}\leq x\leq 1.\\  
\end{cases}
\]
A moment's reflection shows that
the  subalgebra of $\McN(\I)$ generated by 
$\mathcal H_{\Delta,{\bf a}}$ is separating and differs from
$\McN(\I)$.
}
\end{example}

\smallskip
The next two results show  that basic sets
 $\mathcal H_{\Delta, {\bf a}}$ 
generate  {\it all possible} separating
proper
 subalgebras of free MV-algebras and unital
$\ell$-groups:

\begin{theorem}
\label{theorem:delta-basis} Let $A$ be a subalgebra of $\McNn$.
\begin{itemize}
\item[(i)] $A$ is finitely generated, separating, 
and distinct from $\McNn$
iff $A$  is generated by a  basic set
$\mathcal H_{\Delta,{\bf a}}\,,$ for some
   weighted triangulation $(\Delta,{\bf a})=
   (\Delta,(a_1,\ldots,a_u))$  of  
$\cube$  such that  $a_j\not=1$ for some $j
=1,\dots,u$.

\medskip
\item[(ii)]  If
$A$ is finitely generated, separating, and distinct from $\McNn$
then for every    weighted triangulation 
$(\Delta,{\bf a})=
   (\Delta,(a_1,\ldots,a_u))$   of  $\cube$
such that  
  $\mathcal H_{\Delta,{\bf a}}$ is a basic
  generating set of  $A$, it follows that
$a_j\not=1$ for some $j=1,\dots,u$.
\end{itemize}
\end{theorem}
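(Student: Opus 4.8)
The plan is to establish part (ii) first and then feed it into the harder implication of part (i). The engine behind everything is the classical \emph{basis} fact (\cite[\S 6]{mun11}): the Schauder hats of a \emph{regular} triangulation of a rational polyhedron $P$ generate $\McN(P)$; in particular, the Schauder hats of any regular triangulation of $\cube$ generate all of $\McNn$. Throughout I also use that $A$ separating means the tuple $g=(g_1,\dots,g_k)\colon\cube\to[0,1]^k$ built from a finite generating set is an injective $\mathbb Z$-map, hence a homeomorphism onto a rational polyhedron $R$ with $A\cong\McN(R)$ via $f\mapsto f\circ g$, exactly as in the proof of Theorem~\ref{theorem:projective} and \cite[3.4, 3.6]{mun11}.

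For (ii), suppose $\mathcal H_{\Delta,\mathbf a}$ is a basic generating set of $A$ with $a_j=1$ for every $j$. Then each hat $h_i$, restricted to a simplex $T\ni v_i$, is precisely the linear function $l_i$ of Lemma~\ref{lemma:pugliese}(i), so the basic hypothesis forces every simplex of $\Delta$ to be regular; thus $\Delta$ is regular and the $h_i$ are its ordinary Schauder hats. By the basis fact these generate $\McNn$, whence $A=\McNn$, contradicting the assumption that $A$ is distinct from $\McNn$. Hence some $a_j\neq 1$, which is (ii).

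For the backward implication of (i), finite generation is immediate. Separation follows because, after normalizing each hat to $\den(v_i)h_i/a_i$ (value $1$ at $v_i$ and $0$ at the other vertices), one recovers the barycentric-coordinate embedding of $|\Delta|$, which is injective; positivity of the weights makes the un-normalized hats separate points as well. To see $A\neq\McNn$, pick $j$ with $a_j\neq 1$, so $a_j\geq 2$ and $d:=\den(v_j)/a_j<\den(v_j)$. Evaluation at $v_j$ carries every generator into $\{0,a_j/\den(v_j)\}=\{0,1/d\}$, so the image of the evaluation homomorphism $A\to[0,1]$ lies in the MV-subalgebra of $[0,1]$ generated by $1/d$, namely $\frac{1}{d}\mathbb Z\cap[0,1]$. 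Since some McNaughton function attains $1/\den(v_j)=1/(a_j d)\notin\frac{1}{d}\mathbb Z$ at $v_j$, we conclude $A\subsetneq\McNn$.

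The substantive part is the forward implication of (i), and this is where the main obstacle lies. The key point—and the reason $\Delta$ need not be regular—is that $g^{-1}$ is in general \emph{not} a $\mathbb Z$-map, so denominators may drop: one always has $\den(g(x))\mid\den(x)$, but equality can fail. I would choose a regular triangulation $\Sigma$ of $R$ on which $g^{-1}$ is linear (first take a triangulation adapted to the linear pieces of $g^{-1}$, then desingularize via \cite[2.9--2.10]{mun11}), and pull it back to the rational triangulation $\Delta=\{g^{-1}(S)\mid S\in\Sigma\}$ of $\cube$, on whose simplexes $g$ is linear. Writing $w_i$ for the vertices of $\Sigma$, $v_i=g^{-1}(w_i)$, and $s_i$ for the Schauder hat of $\Sigma$ at $w_i$, the pullbacks $h_i:=s_i\circ g$ lie in $A\subseteq\McNn$, are linear on each simplex of $\Delta$, and satisfy $h_i(v_i)=1/\den(w_i)$ and $h_i(v_j)=0$ for $j\neq i$. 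Setting $a_i:=\den(v_i)/\den(w_i)$—a positive integer dividing $\den(v_i)$ because $\den(w_i)\mid\den(v_i)$—identifies each $h_i$ with the $i$th hat of the weighted triangulation $(\Delta,\mathbf a)$; since $h_i\in\McNn$ the set $\mathcal H_{\Delta,\mathbf a}$ is basic, and since the $s_i$ generate $\McN(R)$ the $h_i$ generate $A$. Finally, part (ii) applied to this basic generating set yields $a_j\neq 1$ for some $j$. The hard part is organizing the construction so that $\Sigma$ is simultaneously regular and compatible with the linear pieces of $g^{-1}$, while faithfully tracking the denominator drop $\den(w_i)\mid\den(v_i)$ that produces the nontrivial weights $a_i$.
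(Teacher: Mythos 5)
Your proof is correct, and the core machinery coincides with the paper's: the backward half of (i) via the observation that every $f\in A$ takes a value at $v_j$ that is an integer multiple of $a_j/\den(v_j)$, and the forward half via pulling back the Schauder hats of a regular triangulation $\Sigma$ of $R$ along $g$ and reading off the weights $a_i=\den(v_i)/\den(w_i)$. Where you genuinely diverge is in how the crucial conclusion ``some $a_j\neq 1$'' is obtained in (i)$(\Rightarrow)$: the paper derives it \emph{internally}, invoking Theorem \ref{theorem:sw} to rule out $A\cong\McNn$, then duality and the denominator-drop characterization \cite[3.15]{mun11} to produce an explicit rational point $r$ with $\den(g(r))$ a proper divisor of $\den(r)$, and finally \cite[5.2]{mun11} to force $g(r)$ to be a vertex of $\nabla$. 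You instead prove (ii) first (collapsing the paper's two-case argument --- $\Delta$ non-regular versus $\Delta$ regular with all $a_i=1$ --- into a single application of Lemma \ref{lemma:pugliese}(i) followed by \cite[5.8]{mun11}) and then simply apply (ii) to the basic generating set you have just constructed. This is logically sound, since your proof of (ii) does not use (i), and it buys a real simplification: it eliminates the dependence of Theorem \ref{theorem:delta-basis} on the Stone--Weierstrass theorem and on \cite[3.15]{mun11} and \cite[5.2]{mun11}, at the cost of making the non-unit weight non-constructive (the paper's route exhibits \emph{which} vertex carries it). One small presentational gap: your claim that the normalized hats ``recover the barycentric-coordinate embedding'' deserves a sentence confirming that a point of $\cube$ is determined by its barycentric coordinates with respect to its carrier simplex in $\Delta$, but this is the same standard fact the paper itself leaves implicit in Proposition \ref{proposition:delta-basis}.
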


\begin{proof}
(i) $(\Leftarrow)$  We have only to check 
$A\not=\McNn$.  
Let  $v_1,\ldots,v_u$  be the vertices of $\Delta$.
For each  $f\in A$  the value  $f(v_j)$
is an integer multiple of   $a_j/\den(v_j)$, whence 
$f(v_j) \not=1/\den(v_j)$.
We {\it claim} that some function  in $\McNn$
  attains the value  $1/\den(v_j)$  at  $v_j$.
As a matter of fact, 
   desingularization  as in 
 \cite{mun88} or \cite[5.2]{mun11} yields
 a regular triangulation $\Sigma$  of $\I^n$ such that
$v_j$  is one of the vertices of $\Sigma$.  Let $h_j\colon \cube \to \I$
be the  Schauder hat  of $\Sigma$  at $v_j$,
as in \cite[9.1.3--9.1.5]{cigdotmun}.   By definition, 
$h_j$ is linear on every simplex of $\Sigma,\,\,$
$h_j=1/\den(v_j)$ and $h_j(v_i)=0$  for all other
vertices of $\Sigma$. Our claim is settled.

By \cite[9.1.4]{cigdotmun},
$h_j$ belongs to $\McNn$.  So 
$A$  is strictly contained in $\McNn$.

\medskip
$(\Rightarrow)$  Let  $\{g_1,\ldots,g_k\}$ be a
generating set of  $A$.
Let  $g=(g_1,\ldots,g_k)\colon \cube\to [0,1]^k$, and
$R$ be the range of $g$.  Since $A$ is separating then
$g^{-1}$ is a piecewise linear homeomorphism
of $R$ onto $\cube$  and each linear piece of $g^{-1}$
 has rational coefficients---just because each linear
 piece of $g$ has integer coefficients.
%
%
Let  $\nabla$ be a regular
triangulation of $\McN(R)$ such  that
  $g^{-1}$ is linear over every simplex of 
$\nabla$. The computability of   
 $\nabla$  follows by direct inspection of the proof 
 of  \cite[2.9]{mun11}.     
Let  $w_1,\ldots,w_u$ be the vertices of $\nabla$.
For each  $i=1,\ldots,u$ let
$v_i=g^{-1}(w_i)$.  Since $g$ has integer
coefficients  there is an integer
$1\leq a_i$  such that 
$\den(v_i)=a_i\cdot \den(w_i)$.
The set of simplexes
$$
\Delta=\{g^{-1}(T)\subseteq \cube \mid T\in\nabla\}
$$
is a rational triangulation of $\cube$.
Since  $\McNn$ strictly contains $A$, by
Theorem \ref{theorem:sw} it is impossible for
$A$ to be isomorphic to    $\McNn$.  
Since by \cite[3.6]{mun11}  $A\cong\McN(R),$ then  $\McN(R)$
is not isomorphic to $\McNn$.
By duality \cite[3.10]{mun11},
  $\cube$ is not $\mathbb Z$-homeomorphic
to $R$.  As observed in proof of Theorem~\ref{theorem:decidable-sw}, $g$ is not a
$\mathbb Z$-homeomorphism of $\cube$ onto $R$. By
\cite[3.15]{mun11} there is a rational point $r\in\cube$
such that $\den(g(r))$ is a divisor of $\den(r)$ 
different from  $\den(r)$.   
Stated otherwise,
  $\den(r)=m\cdot \den(g(r))$ with  
$m\not= 1$. 
By   \cite[5.2]{mun11}, it
is no loss of generality to assume that $\nabla$
has $g(r)$ among its vertices.  Thus, for some $j$ we can
assume that $r$ is the $j$th vertex of
$\Delta$  and write
$$
r=v_j,\,\,\,g(r)=w_j,\,\,\,\,1\not=a_j=\frac{\den(v_j)}{\den(w_j)}. 
$$
As in the proof of
 (i) above, let  $\mathcal H_{\nabla}={h_1,\ldots,h_u}$  be the
set of  Schauder hats of  $\nabla$.  
By Lemma \ref{lemma:pugliese}(i), 
for every  $i=1,\ldots,u$
each linear piece of  $h_i$
has integer coefficients. 
By   \cite[5.8]{mun11},
$\,\,\,\mathcal H_\nabla$ generates $\mathcal M(R)$.
By construction of $\nabla,$   the composite function
$h_i\circ g$ belongs to $\McNn$, has value  
$a_i/\den(v_i)\geq1/\den(v_i)$
at $v_i$, has value zero at any other vertex of $\Delta,$  and is
linear over every simplex of $\Delta$.
Therefore,  the weighted triangulation
$(\Delta,(a_1,\ldots,a_u))=(\Delta,{\bf a})$
determines the basic set    
$$
\mathcal H_{\Delta,{\bf a}}=\mathcal H_\nabla\circ g=\{h_i\circ g\mid
h_i\in \mathcal H_\nabla, \,\,\, i=1,\ldots,u \}, 
$$
which generates $A$,
just as $\mathcal H_\nabla$ generates $\mathcal M(R)\cong A$.

\medskip
(ii)  We argue by cases:

\smallskip
In case  $\Delta$  is not regular,   let
$T=\conv(w_0,\ldots,w_n)$ be an $n$-simplex in 
$\cube$ that  fails to be regular.
By hypothesis,  the linear pieces of each hat of  
$\mathcal H_{\Delta,{\bf a}}$  have  integer coefficients, 
and so do,  in particular,   the   linear functions
$l_0,\ldots,l_n\colon \mathbb R^n\to\mathbb R$
 given by the following stipulations, 
 for each  $t=0,\ldots,n:$
 \begin{itemize}
\item[---] $\,\,\,l_t$ is linear over $T$,
\item[---] $\,\,\,l_t(w_t)=a_t/\den(v_t)$, 
and 
\item[---] $\,\,\,l_t(w_s)=0$  for each $s\not=t$.
 \end{itemize}
By Lemma \ref{lemma:pugliese}(i),
 not   all $a_t$ can be
equal to 1. 

\smallskip
In case  $\Delta$ is regular, 
suppose
 $a_i=1$   for each  $i=1,\ldots,u$  (absurdum hypothesis).
 Then  $\mathcal H_{\Delta,{\bf a}}$ is precisely  the
 set of Schauder hats of  $\Delta$.   By \cite[5.8]{mun11},
$\,\,\mathcal H_{\Delta,{\bf a}}$
 generates  $\McNn$, which contradicts the hypothesis
  $A\not=\McNn$.
\end{proof}

%

\begin{corollary}
\label{corollary:delta-basis} Let $(G,u)$ be a 
unital $\ell$-subgroup of  $\McN_{\rm group}(\I^n)$.
\begin{itemize}
\item[(i)] $(G,u)$ is finitely generated, separating, 
and distinct from $\McN_{\rm group}(\I^n)$
iff 
$(G,u)$  is generated by a basic set 
$\mathcal H_{\Delta,{\bf a}}$ for some
   weighted triangulation $(\Delta,{\bf a})$  of  
$\cube$  such that   $a_j\not=1$ for some $j$.

\smallskip
\item[(ii)]  If
$(G,u)$ is finitely generated, separating, and distinct from 
$\McN_{\rm group}(\I^n)$
then for every    weighted triangulation 
$(\Delta,{\bf a})$   of  $\cube$
such that  
  $\mathcal H_{\Delta,{\bf a}}$ is a
  basic generating set of  $(G,u)$, it follows that
$a_j\not=1$ for some $j$.\hfill$\Box$
\end{itemize}
\end{corollary}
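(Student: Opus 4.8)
The plan is to deduce the statement from its MV-algebraic counterpart, Theorem~\ref{theorem:delta-basis}, by transporting every hypothesis and conclusion across the categorical equivalence $\Gamma$. Write $A=\Gamma(G,u)$ for the MV-algebra carried by the unit interval $\{f\in G\mid 0\le f\le u\}$, and recall from \eqref{equation:gamma} that $\Gamma(\McN_{\rm group}(\cube))=\McNn$; then $A$ sits inside $\McNn$ exactly as $(G,u)$ sits inside $\McN_{\rm group}(\cube)$. First I would assemble a short dictionary showing that the four relevant attributes---finite generation, separation, distinctness from the ambient structure, and being generated by a prescribed basic set---hold for $(G,u)$ iff they hold for $A$. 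Once this is in place, both (i) and (ii) reduce verbatim to Theorem~\ref{theorem:delta-basis}(i),(ii), since the side condition ``$a_j\neq1$ for some $j$'' is a property of the combinatorial datum $(\Delta,{\bf a})$ alone and is therefore literally the same in both settings.

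Three of the four correspondences are routine consequences of $\Gamma$ being an equivalence of categories. Finite generation is preserved and reflected because $\Gamma$ carries the free unital $\ell$-group on $n$ generators to $\McNn$ (cf.\ \cite[4.16]{mun86}). Distinctness transfers because a unital $\ell$-group is generated, as a group, by its unit interval: since $u$ is a strong order unit, every $g\in G$ with $g\ge 0$ is a finite sum of elements of $\{f\mid 0\le f\le u\}$, so two unital $\ell$-subgroups of $\McN_{\rm group}(\cube)$ sharing the same unit interval coincide; hence $A=\McNn$ iff $(G,u)=\McN_{\rm group}(\cube)$. Separation is the one point needing a small argument. One direction is immediate, since $A\subseteq G$. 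For the converse, given $x\neq y$ and $f\in G$ with $f(x)\neq f(y)$, I would truncate: with $m=\lfloor\min(f(x),f(y))\rfloor$ the function $\bigl((f-m u)\vee 0\bigr)\wedge u$ lies in $A$, keeps the smaller of the two values in $[0,1)$, and sends the larger to a strictly greater number in $[0,1]$, so it still separates $x$ from $y$. Thus $A$ separates points of $\cube$ iff $(G,u)$ does.

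For the fourth correspondence I would use that each hat $h_i$ of a basic set $\mathcal H_{\Delta,{\bf a}}$ satisfies $0\le h_i\le u$ (indeed $h_i(v_i)=a_i/\den(v_i)\le1$ since $a_i\mid\den(v_i)$), so the hats lie simultaneously in $A$ and in $(G,u)$. Because $\Gamma$ matches subobjects, the MV-subalgebra of $\McNn$ generated by $\mathcal H_{\Delta,{\bf a}}$ is exactly the unit interval of the unital $\ell$-subgroup of $\McN_{\rm group}(\cube)$ generated by $\mathcal H_{\Delta,{\bf a}}$: one inclusion holds because $\Gamma$ of the $\ell$-group generated by the hats is an MV-subalgebra containing them, the reverse because $\Gamma^{-1}$ of the MV-subalgebra they generate is a unital $\ell$-subgroup containing them. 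Hence ``$\mathcal H_{\Delta,{\bf a}}$ generates $(G,u)$'' is equivalent to ``$\mathcal H_{\Delta,{\bf a}}$ generates $A$''.

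With the dictionary in hand the argument is mechanical: for (i), ``$(G,u)$ finitely generated, separating, proper'' $\iff$ ``$A$ finitely generated, separating, proper'' $\iff$ (Theorem~\ref{theorem:delta-basis}(i)) ``$A$ is generated by a basic $\mathcal H_{\Delta,{\bf a}}$ with some $a_j\neq1$'' $\iff$ ``$(G,u)$ is generated by such a set''; part (ii) follows identically from Theorem~\ref{theorem:delta-basis}(ii). I expect the only genuine obstacles to be the separation equivalence and the fact that a unital $\ell$-group is recovered from its unit interval; the remaining steps are bookkeeping under $\Gamma$. The one point to watch is that the datum $(\Delta,{\bf a})$ is literally the \emph{same} object in both categories, since $\Delta$ and the weights $a_i$ are defined purely from $\cube$ and denominators, independently of whether one works with $A$ or with $(G,u)$.
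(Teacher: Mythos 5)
Your proposal is correct and follows the same route the paper intends: the corollary is stated with no written proof precisely because it is the image of Theorem~\ref{theorem:delta-basis} under the equivalence $\Gamma$, and your dictionary (finite generation, separation via truncation into $[0,u]$, recovery of a unital $\ell$-subgroup from its unit interval, and the subobject correspondence for the basic set) is exactly the transfer the authors leave implicit. The details you supply are accurate, so nothing further is needed.
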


\section{Computing a basis of a 
finitely generated subalgebra of $\McNn$}
\label{section:basis}
 By    \cite[6.6]{mun11},
 every finitely generated
subalgebra $A$ of $\McNn$  is 
finitely presented,  
i.e.,  $A$ is a principal quotient of a free MV-algebra. 
Equivalently,   \cite[6.1, 6.3]{mun11},
$\,\,A$ has a {\it basis}, i.e.,  a   set of nonzero elements
$\mathcal B=\{b_1,\ldots,b_z\}$, together with
integers  
$1\leq m_1,\dots,m_z$ (called ``multipliers'')
such that
\begin{itemize}
\item[(a)]  $\mathcal B$  generates $A$.

\smallskip
\item[(b)]  
$m_1  b_1+\dots+m_z b_z=1$ 
 where the sum is computed
in the   unital
$\ell$-group  $(G,u)$  of $A$  given by $\Gamma(G,u)=A$.
See \cite[6.1(iii')]{mun11}.

\smallskip
\item[(c)] 
For each  $k$-element subset 
$C=b_{i_1},\ldots,b_{i_k}$ of $\mathcal B$  with
$b_{i_1}\wedge\ldots\wedge b_{i_k}\not=0$,
 the set of maximal ideals
of $A$ containing  $\mathcal B\setminus C$  is homeomorphic
to a $(k-1)$-simplex,  \,\,\,$(k=1,2,\ldots)$.
\end{itemize}

We now
prove that  every basic set  is a basis of the MV-algebra
it generates.

\begin{proposition}
\label{proposition:delta-basis}
Suppose the  weighted triangulation
$(\Delta,(a_1,\ldots,a_u))=(\Delta,{\bf a})$ of $\cube$ 
determines the basic set  $\mathcal H_{\Delta,{\bf a}}$.  
Let $v_1,\dots,v_u$ be the vertices of $\Delta$.
Then the  MV-subalgebra $A$ of $\McNn$ generated by
 $\mathcal H_{\Delta,{\bf a}}$   is separating,
and     $\mathcal H_{\Delta,{\bf a}}$  is a  basis of $A$%
%
%
, whose multipliers $m_i$ 
coincide with $\den(v_i)/a_i$ for each  
$i=1,\dots,u$.
\end{proposition}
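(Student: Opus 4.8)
The plan is to read off everything from the piecewise-linear geometry of the hats, using their support structure as the main tool. The key preliminary observation I would establish is a \emph{support lemma}: for every $x\in\cube$ and every vertex $v_i$ of $\Delta$, one has $h_i(x)>0$ if and only if $v_i$ is a vertex of the unique simplex $\sigma_x\in\Delta$ with $x\in\relint(\sigma_x)$ (the carrier of $x$). This is proved by fixing a maximal simplex $T$ of $\Delta$ having $\sigma_x$ as a face, writing $x$ in barycentric coordinates of $T$, and noting that each $h_i$ is linear on $T$ with $h_i(x)=a_i\lambda_i/\den(v_i)$ when $v_i$ is a vertex of $T$ (where $\lambda_i$ is its barycentric coordinate at $x$), and $h_i\equiv 0$ on $T$ otherwise. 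Thus $\{i:h_i(x)>0\}$ is exactly the vertex set of $\sigma_x$.

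Separation follows immediately: if $x\neq y$ have different carriers, the support patterns $\{i:h_i(x)>0\}$ and $\{i:h_i(y)>0\}$ differ, since distinct simplexes of $\Delta$ have distinct vertex sets; if they share a carrier $\sigma$, then $x$ and $y$ have distinct barycentric coordinate vectors, and because $h_i(x)=a_i\lambda_i/\den(v_i)$ with the factor $a_i/\den(v_i)$ a fixed positive constant, some hat separates them. This proves the first assertion, and condition (a) of a basis holds by the very definition of $A$.

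For the multipliers and condition (b) I would note that $m_ih_i$, with $m_i=\den(v_i)/a_i$ (a positive integer since $a_i\mid\den(v_i)$), is precisely the standard piecewise-linear ``tent'' $t_i$ that is linear on every simplex of $\Delta$ and satisfies $t_i(v_i)=1$, $t_i(v_j)=0$ for $j\neq i$. The $t_i$ form a Lagrange partition of unity, so $\sum_i t_i\equiv 1$ pointwise, i.e.\ $\sum_i m_ih_i=1$; since the $\ell$-group of $A$ embeds in $\McN_{\rm group}(\cube)$ with group operation pointwise addition and unit the constant $1$, this is exactly (b). Evaluating the identity $\sum_j m_jh_j=1$ at $v_i$ and using $h_j(v_i)=\delta_{ij}a_i/\den(v_i)$ gives $m_ia_i/\den(v_i)=1$, which both forces uniqueness of the multipliers and confirms $m_i=\den(v_i)/a_i$.

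The substantive step is condition (c). Because $A$ is separating, the generating map $g=(h_1,\dots,h_u)$ is a homeomorphism of $\cube$ onto its range $R$, and combining $A\cong\McN(R)$ (\cite[3.6]{mun11}) with \cite[4.18]{mun11} identifies the maximal spectral space $\mu(A)$ with $\cube$ via evaluation; under this identification the maximal ideal of a point $x$ contains $b\in A$ iff $b(x)=0$. Hence for $C=\{h_{i_1},\dots,h_{i_k}\}$ the set of maximal ideals containing $\mathcal{H}_{\Delta,{\bf a}}\setminus C$ corresponds to $\{x\in\cube: h_i(x)=0 \text{ for all } i\notin\{i_1,\dots,i_k\}\}$. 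By the support lemma this is exactly the set of points whose carrier has all its vertices among $v_{i_1},\dots,v_{i_k}$, namely the face $\conv(v_{i_1},\dots,v_{i_k})$. I would then check that $h_{i_1}\wedge\dots\wedge h_{i_k}\neq 0$ holds precisely when $v_{i_1},\dots,v_{i_k}$ span a simplex of $\Delta$ (again from the support lemma: a common point of positivity forces these vertices into one carrier, and conversely the barycenter of such a simplex is a common point of positivity), so in that case the $v_{i_j}$ are affinely independent and the face is a genuine $(k-1)$-simplex, whose homeomorphic image under the point/ideal correspondence settles (c). The main obstacle I anticipate is the bookkeeping here: making the carrier/support argument airtight for lower-dimensional faces, and correctly matching the combinatorial ``nonzero meet'' condition with ``spans a simplex'', so that the resulting set is exactly a $(k-1)$-simplex rather than some larger polyhedron.
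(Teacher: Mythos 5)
Your proposal is correct and takes essentially the same route as the paper's proof: the paper likewise disposes of (a) and (b) via the partition of unity and reduces (c), through the identification of the maximal spectral space of $A$ with $\cube$, to recognizing the locus $\{x\in\cube:\ m_{i_1}h_{i_1}(x)+\cdots+m_{i_k}h_{i_k}(x)=1\}$ (equivalently, the set where the complementary hats vanish) as the face $\conv(v_{i_1},\dots,v_{i_k})$ of $\Delta$. The only difference is one of presentation: the paper delegates these steps to citations of [4.18], [6.1(ii')] and [5.8(ii)] of Mundici's book, whereas you carry them out directly and self-containedly via your carrier/support lemma and barycentric coordinates.
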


\begin{proof}    $A$ is a separating subalgebra of $\McNn$
 because it is
generated by the hats of a triangulation of $\cube$.
From the  definition
 of  $\mathcal H_{\Delta,{\bf a}}$ 
 together with \cite[4.18]{mun11} and \cite[6.1(ii')]{mun11},
 it follows that
the conclusion of 
condition (c) above    
is equivalent to saying  that 
the set of points $x\in\cube$  such that
$m_{i_1}b_{i_1(x)}+\cdots+m_{i_k}b_{i_k}(x)=1$ is homeomorphic
to a $(k-1)$-simplex.    It is now easy to see that
   $\mathcal H_{\Delta,{\bf a}}$ satisfies 
    condition (c).    (See the proof of
 \cite[5.8(ii)]{mun11}).
 Condition (b) is trivially satisfied. 
Therefore, 
   $\mathcal H_{\Delta,{\bf a}}$ is a basis of $A$.
\end{proof}
%
%


\begin{remark}
By Lemma  \ref{lemma:pugliese}(iii), 
one can decide whether a weighted triangulation
$\Delta$  with multiplicities  ${\bf a}$ determines a 
basic set   $\mathcal H_{\Delta,{\bf a}}$.
By contrast, the  decidability of the problem
whether   
$\{\hat\tau_i,\ldots,\hat\tau_k\}$ is a basis
of the MV-algebra it generates is open. See
\cite[p.213]{mun11}. 
Interestingly enough, perusal of \cite[\S 6.5]{mun11} shows that
{\it every basis of $A\subseteq \McNn$ becomes
a basic generating set of $A$ after finitely many binary
algebraic blowups.}
\end{remark}

\medskip

In the light of the foregoing proposition, the following
theorem provides an effective method to transform 
every generating set of $A\subseteq \McNn$ into a 
basis
of~$A$:

\begin{theorem}
 [Effective basis generation]
\label{theorem:effective}
Every list of
terms  $\tau_1,\ldots,\tau_k$
in the variables $X_1,\ldots,X_n,$
such that  the subalgebra $A$
of $\McNn$ generated by the McNaughton functions
$\hat\tau_1,\ldots,\hat\tau_k$
is separating,  can be effectively transformed
  into a 
 weighted  triangulation  $(\Delta,{\bf a})=
 (\Delta, (a_1,\dots,a_z))$ of the $n$-cube $\cube$,
 with vertices  $v_1,\dots,v_z$,
in such a way that 
 $\mathcal H_{\Delta, {\bf a}}$ 
 is a basic generating set of  $A$.
%
%
%
%
%
%
We can effectively write down MV-terms
$\sigma_1,\ldots,\sigma_z$ representing
the hats of $\mathcal H_{\Delta, {\bf a}}$. 
\end{theorem}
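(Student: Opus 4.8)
The plan is to leverage the machinery already developed in Theorem~\ref{theorem:separation-is-decidable} and its proof, together with the duality and the characterization of basic sets. Since $A$ is separating, the map $g=(\hat\tau_1,\ldots,\hat\tau_k)\colon\cube\to\kube$ is a $\mathbb Z$-map that is one-to-one, hence a $\mathbb Z$-homeomorphism of $\cube$ onto its range $R$, by the arguments already used in the proof of Theorem~\ref{theorem:projective}. First I would recompute, exactly as in the decidability proof, a \emph{regular} triangulation $\Delta_g$ of $\cube$ over which $g$ is linear; this is effective because the linear pieces of each $\hat\tau_i$ can be listed by induction on the number of connectives, a common refinement computed, and the effective desingularization of \cite[2.9--2.10]{mun11} applied. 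Then $g(\Delta_g)=\{g(T)\mid T\in\Delta_g\}$ is a triangulation of $R$, and because $g^{-1}$ is a $\mathbb Z$-map one may pass to a regular triangulation $\nabla$ of $R$ on which $g^{-1}$ is linear.

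Next I would follow the construction in the $(\Rightarrow)$ direction of Theorem~\ref{theorem:delta-basis}(i). Setting $v_i=g^{-1}(w_i)$ for the vertices $w_1,\ldots,w_z$ of $\nabla$, the collection $\Delta=\{g^{-1}(T)\mid T\in\nabla\}$ is a rational triangulation of $\cube$, whose vertices are the $v_i$. Because $g$ has integer linear pieces, each $\den(v_i)$ is an integer multiple $a_i\cdot\den(w_i)$, and these $a_i\geq 1$ are effectively computable from the vertex data. I would then set $\mathbf a=(a_1,\ldots,a_z)$ and verify that $(\Delta,\mathbf a)$ is a weighted triangulation with $a_i\mid\den(v_i)$. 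Invoking \cite[5.8]{mun11}, the Schauder hats $\mathcal H_\nabla=\{h_1,\ldots,h_z\}$ of $\nabla$ generate $\McN(R)$, and each composite $h_i\circ g$ lies in $\McNn$, is linear on every simplex of $\Delta$, takes the value $a_i/\den(v_i)$ at $v_i$ and $0$ at every other vertex. Hence $\mathcal H_{\Delta,\mathbf a}=\mathcal H_\nabla\circ g$ is a basic set generating $A$, since $\mathcal H_\nabla$ generates $\McN(R)\cong A$ via the isomorphism $\McN(g)$.

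For the final sentence, I would produce the MV-terms $\sigma_1,\ldots,\sigma_z$ by explicit construction. Each hat $h_i\circ g$ is a McNaughton function in $\McNn$, so by McNaughton's theorem \cite[9.1.5]{cigdotmun} it is represented by some MV-term; the point is to obtain these terms \emph{effectively}. Here I would use the constructive content of McNaughton's theorem: from the list of integer-coefficient linear pieces of $h_i$ over $\nabla$ (equivalently, of $h_i\circ g$ over $\Delta$), together with the combinatorial data of the triangulation, one writes down the term as a $\oplus$/$\neg$-combination of truncated linear forms, exactly as in the algorithmic version of McNaughton's theorem found in \cite[9.1.3--9.1.5]{cigdotmun}. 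Since all vertices, denominators, multipliers, and linear pieces have been computed explicitly, this step is a finite effective procedure.

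The main obstacle I would expect is not conceptual but bookkeeping: ensuring that the desingularization producing $\nabla$ (and its pullback $\Delta$) is genuinely effective while preserving the property that $g$ and $g^{-1}$ remain linear on the relevant simplexes, so that the composites $h_i\circ g$ are exactly the hats of $(\Delta,\mathbf a)$ with the claimed values. One must check that subdividing $R$ to make $g^{-1}$ piecewise linear does not destroy regularity, and that the multipliers $a_i$ read off from the denominators match the basis multipliers $\den(v_i)/a_i$ guaranteed by Proposition~\ref{proposition:delta-basis}. All of this is available from the cited effective procedures in \cite{mun11} and \cite{cigdotmun}, so the theorem follows by assembling these pieces.
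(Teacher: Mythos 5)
Your construction is, step for step, the one the paper uses: linearize $g$ over a rational triangulation of $\cube$, push forward to $R$, desingularize to a regular $\nabla$, pull back to $\Delta=g^{-1}(\nabla)$, read off the multipliers $a_i=\den(v_i)/\den(g(v_i))$, and obtain the basic set as $\mathcal H_\nabla\circ g$, with the terms $\sigma_j$ coming from the constructive McNaughton theorem \cite[9.1.3--9.1.5]{cigdotmun}. So the route is the paper's route, and the construction itself is sound.

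There is, however, one false assertion in your opening: a one-to-one $\mathbb Z$-map $g\colon\cube\to\kube$ is \emph{not} automatically a $\mathbb Z$-homeomorphism onto its range, and this is not what the proof of Theorem~\ref{theorem:projective} shows (it only gives that $g$ is a homeomorphism and a $\mathbb Z$-map). Indeed, Example~\ref{example:schauder} gives a separating $g$ whose inverse is not a $\mathbb Z$-map; more tellingly, if $g$ were always a $\mathbb Z$-homeomorphism then $\den(v_i)=\den(g(v_i))$ for every vertex, all your multipliers $a_i$ would equal $1$, and by Theorem~\ref{theorem:delta-basis}(ii) the theorem would only ever produce $A=\McNn$. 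Fortunately the claim is not load-bearing: to pass to $\nabla$ you only need that $g^{-1}$ is piecewise linear with \emph{rational} coefficients (which follows from $g$ being an injective $\mathbb Z$-map linear on each simplex of $\Sigma$, so that $g^{-1}$ is linear on each $g(T)$ and remains so under the subdivision $\nabla$ of $g(\Sigma)$), and the rest of your argument uses only the integrality of $g$, not of $g^{-1}$. Replace ``hence a $\mathbb Z$-homeomorphism'' by ``hence a piecewise linear homeomorphism onto $R$ whose inverse has rational linear pieces,'' and the proof matches the paper's.
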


\begin{proof}
Let 
$g
=(\hat\tau_1,\ldots,\hat\tau_k)
\colon \cube\to\kube$,  and
$R=$ range of $g$.  
   Since $A$ is separating,
   $g$ is a piecewise linear homeomorphism onto $R$.
The transformation proceeds as follows:

\begin{itemize}
\item[(i)] 
Arguing as in the proof of 
Theorem  \ref{theorem:separation-is-decidable},
we first compute from $\tau_1,\ldots,\tau_k$
 a regular triangulation $\Sigma$  of $\cube$
such that $g$ is linear over every simplex of $\Sigma$
(also see  \cite[18.1]{mun11}). 
$\Sigma$  is written down as the list of the
sets of  rational vertices
 of its simplexes.

\item[(ii)]  We write down  the rational
triangulation  $g(\Sigma)=\{g(T)\mid T\in \Sigma\}$ of $R$ given by the
 $g$-images of the simplexes of $\Sigma$.
 This is effective,  because
 the linear pieces  of each function  $\hat\tau_i$
 are computable from the MV-term  $\tau_i,$ and so
 is the linear piece  $l_T$  of $g\restrict T.$

\item[(iii)]   Using desingularization (\cite[2.9, 18.1]{mun11}),
we subdivide $g(\Sigma)$
into a  {\it regular}   triangulation
$\nabla$   of $R$.
Let   $\Delta$ be the  rational subdivision
 of $\Sigma$ defined by   $g(\Delta)=\nabla$. 
   Since
$g$ determines  a computable one-to-one
     correspondence   
     between the $n$-simplexes of $\nabla$
     and those of $\Delta$, then also 
     $\Delta$ is computable.
     
     \item[(iv)] 
       Let us write  $\mathcal H_\nabla$  for the
  set of Schauder hats of  $\nabla$,  \cite[5.7]{mun11}.
  For each vertex  $w$  of  $\nabla$,
  we can effectively write down an MV-term  
  $\gamma_w(X_1,\dots,X_k)$ such that
  the restriction to $R$ of the   associated
  McNaughton function  $\hat\gamma_w$
  is the hat of  $\mathcal H_\nabla$ with vertex $w$.
  Thus  $\mathcal H_\nabla$ can be effectively
  computed.
The routine verification can be made 
arguing as in  the
  proof of \cite[9.1.4]{cigdotmun}.

\item[(v)]   
  
   Observe that 
  $g^{-1}$  is  linear on every simplex of $\nabla$,
  and is   an explicitly given  piecewise linear
function. As a matter of fact, for each $n$-simplex
$U$ of  $\nabla$,  the map   
$g^{-1}\restrict U$ is an $n$-tuple of linear polynomials with
 rational coefficients that can be effectively computed from 
 the $k$-tuple of  linear polynomials with integer
 coefficients  
 %
 %
 %
%
%
%
$(\hat\tau_1,\ldots,\hat\tau_k)\restrict g^{-1}(U)$.
For each  $i=1,\dots,k$,
arguing by induction on the number of connectives
of all subterms of   $\tau_i$ one effectively
computes the linear function
$\hat\tau_i\restrict g^{-1}(U)$.

\item[(vi)]  Let  $v_1,\dots,v_z$ be the vertices of $\Delta$.
For each  $j=1,\dots,z$,  
we  set  
\begin{equation}
\label{equation:ratios}
a_j=\den(v_j)/\den(g(v_j)).
\end{equation} 
Since $g$ is piecewise linear with
integer coefficients, each  $a_j$ is an integer.
Recalling that $\circ$ denotes composition,
the  weighted triangulation 
$(\Delta,{\bf a})$ determines the basic
set    $\mathcal H_{\Delta,{\bf a}}=\{h\circ g\mid h\in \mathcal H_\nabla\}$. 
By \cite[5.8]{mun11},  $\mathcal H_\nabla$  generates  $\McN(R)$,
whence  $\mathcal H_{\Delta,{\bf a}}$ generates $A$.
\end{itemize}
%
%
%
%
%
Since   $\Delta$ is explicitly given
by listing the sets of the vertices of its simplexes,
and  the   integers  $a_1,\dots,a_z$ associated to
the vertices  $v_1,\dots,v_z$ of $\Delta$
are computed from \eqref{equation:ratios},
   the proof of \cite[9.1.4]{cigdotmun}
 yields an effective procedure to write
down  MV-terms  $\sigma_1,\ldots,\sigma_z$ such that
$\{\hat\sigma_1,\ldots,\hat\sigma_z\}= \mathcal H_{\Delta,{\bf a}}$.
\end{proof}

 \section{Recognizing  subalgebras of $\McNn$}
 \label{section:recognizing}

A main application of
 Theorem \ref{theorem:effective} is given by
 the following decidability result:  
 
 \begin{theorem}
 \label{theorem:identity} 
 The following problem is decidable:
  
 \smallskip
\noindent
${\mathsf{INSTANCE}:}$  MV-terms  $\tau_1,\ldots,\tau_k$
and $ \sigma_1,\ldots,\sigma_l$
 in the variables
$X_1,\ldots,X_n$, such that both sets
$\{\hat\tau_1,\ldots,\hat\tau_k\}$ and
$\{\hat\sigma_1,\ldots,\hat\sigma_l\}$
separate points  (the separation property being decidable by
Theorem \ref{theorem:separation-is-decidable}).

\smallskip 
\noindent
${\mathsf{QUESTION}:}$  Does  the subalgebra  $A$ of
$\McNn$ generated by   $\hat\tau_1,\ldots,\hat\tau_k$
coincide with    the subalgebra  $A'$   of
$\McNn$ generated by   $\hat\sigma_1,\ldots,\hat\sigma_l$?
 \end{theorem}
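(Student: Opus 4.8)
The plan is to reduce the question "$A = A'$?" to a finite, effectively checkable comparison of two bases obtained via Theorem~\ref{theorem:effective}. First I would apply the Effective basis generation theorem to each input list: from $\tau_1,\ldots,\tau_k$ I compute a weighted triangulation $(\Delta,{\bf a})$ of $\cube$ whose basic set $\mathcal H_{\Delta,{\bf a}}$ generates $A$, and from $\sigma_1,\ldots,\sigma_l$ a weighted triangulation $(\Delta',{\bf a}')$ whose basic set $\mathcal H_{\Delta',{\bf a}'}$ generates $A'$. By Proposition~\ref{proposition:delta-basis} these basic sets are honest \emph{bases} of $A$ and $A'$ respectively, with explicitly computed multipliers $\den(v_i)/a_i$. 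Both triangulations, their vertices, the associated integers, and MV-terms $\sigma_j$ representing the hats are all effectively written down.

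The key observation is that $A = A'$ if and only if the two algebras have the same set of generators up to membership, which I would test by checking whether each hat of $\mathcal H_{\Delta,{\bf a}}$ lies in $A'$ and each hat of $\mathcal H_{\Delta',{\bf a}'}$ lies in $A$. Since membership of a specific McNaughton function (presented by an MV-term) in a finitely generated subalgebra of $\McNn$ presented by a basic set is a decidable word-problem question (the subalgebra is finitely presented by \cite[6.6]{mun11}, and word problems in finitely presented MV-algebras are solvable via the \luk\ calculus, cf.\ (iv) of the introduction and \cite{cigdotmun}), each of these finitely many membership tests can be carried out. I would phrase this cleanly: compute a joint regular subdivision $\Theta$ of $\Delta$ and $\Delta'$, lift both bases to $\Theta$ so that all hats in question become piecewise linear over a common triangulation, and then compare the two subalgebras by checking that every generator of one is representable as an MV-term over the generators of the other.

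The hard part will be turning "membership of a generator" into a finitary decidable test in a way that does not secretly smuggle in an undecidable problem; the safe route is to exploit that both $A$ and $A'$ are \emph{separating} subalgebras of $\McNn$. Under separation, by the duality and by the analysis in Section~\ref{section:method}, an algebra is completely determined by its associated weighted triangulation data: the underlying rational triangulation (or rather the polyhedral structure it induces) together with the vertex-denominator-to-weight ratios. Concretely, $A=A'$ exactly when, passing to the common refinement $\Theta$ of $\Delta$ and $\Delta'$ (which is effectively computable as a joint regular subdivision, as in the proof of Theorem~\ref{theorem:separation-is-decidable}), the induced weights agree: for every vertex $v$ of $\Theta$, the value $\den(v)/\mathrm{(weight\ of\ }A\mathrm{\ at\ }v)$ equals $\den(v)/\mathrm{(weight\ of\ }A'\mathrm{\ at\ }v)$. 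This is because a separating subalgebra is recovered from the gcd-type data recording, at each rational point $v$, the smallest value the algebra can attain there, and this datum is read off from the multipliers. Thus the decision procedure is: compute $(\Delta,{\bf a})$ and $(\Delta',{\bf a}')$; form their joint subdivision $\Theta$; propagate the weights of each algebra to the vertices of $\Theta$ (a linear-algebra computation over the integers, effective since both hat-systems are piecewise linear with integer pieces over $\Theta$); and compare the resulting weight functions vertex by vertex.

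I expect the principal obstacle to be justifying that the per-vertex weight data is a \emph{complete} invariant of a separating finitely generated subalgebra---that is, that two such subalgebras with identical weight functions on a common triangulation must coincide as sets of functions, not merely be isomorphic. The forward direction (equal algebras give equal weights) is immediate; the delicate direction is the converse. I would secure it using Theorem~\ref{theorem:delta-basis}: a separating subalgebra is generated by the basic set of its weighted triangulation, and the basic set---hence the algebra---is uniquely determined by $(\Delta,{\bf a})$ up to the choice of refinement. Propagating to $\Theta$ removes the ambiguity coming from different initial triangulations, so equality of weights on $\Theta$ forces equality of the two basic generating sets, and therefore $A=A'$. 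Once this completeness is established, decidability follows since every step---computing the bases, the joint subdivision, and comparing finitely many integer weights---is effective.
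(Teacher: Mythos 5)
Your overall architecture is the paper's: invoke Theorem~\ref{theorem:effective} to replace each generating set by a basic set of a weighted triangulation, pass to a common refinement, and decide the question by per-vertex integer arithmetic. But both of the routes you offer for the crucial step have a gap. The first route---deciding whether a given hat belongs to the other subalgebra by appealing to the solvability of ``word problems in finitely presented MV-algebras''---conflates the word problem with the subalgebra \emph{membership} (generalized word) problem. Decidability of the latter is exactly what is at stake here, not something you may import from the \L ukasiewicz calculus; indeed Section~\ref{section:final} of the paper indicates that without the separation hypothesis such membership-type questions are as hard as Markov-style unrecognizability problems, so this appeal is essentially circular.

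Your second route---compare, vertex by vertex on a joint subdivision $\Theta$, the minimal positive values attainable by $A$ and by $A'$---is closer to the paper, but as stated it needs two things you do not supply. First, for the propagated weights on $\Theta$ to yield basic \emph{generating} sets of $A$ and of $A'$ simultaneously, $\Theta$ must be adapted to both algebras at once: its image under the $A$-generating map and its image under the $A'$-generating map must both be regular triangulations of the respective ranges (this is what lets one cite \cite[5.8]{mun11} to conclude the new hats still generate). Desingularizing one image and pulling back can destroy regularity of the other, and the naive ping-pong is not shown to terminate; Theorem~\ref{theorem:delta-basis} gives existence of \emph{some} basic generating set, not refinement-invariance, so it does not ``remove the ambiguity'' as you claim. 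Second, the completeness of the per-vertex weight invariant is asserted rather than proved. The paper sidesteps both difficulties with an asymmetric test applied twice: it refines adapted to $A$ only (obtaining $(\Sigma,\mathbf c)$ with hats $q_v$), and then tests each generator $h'_j$ of $A'$ for membership in $A$ by checking whether $q_v(v)$ divides $h'_j(v)$ at every vertex $v$; soundness is immediate because every element of $A$ takes values at $v$ that are integer multiples of $q_v(v)$, and completeness is established by explicitly writing $h'_j$ as a sum of positive integer multiples of the $q_v$, the local representations over the simplexes patching because the coefficients depend only on the vertices. You would need to supply an argument of this kind (or prove the simultaneous-adaptedness and completeness claims) for your procedure to be correct.
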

 
 \begin{proof}
 It is enough to decide $A\supseteq A'$.  By
 Theorem \ref{theorem:effective} we can safely
 suppose that for some
  weighted triangulations
 $(\Delta,{\bf a})$ and   $(\Delta',{\bf a}')$,  the MV-algebras 
 $A$ and $A'$ are respectively generated by the
 basic sets
\begin{equation}
\label{equation:indici}
\mathcal H_{\Delta,{\bf a}}=\{h_1,\ldots,h_r\}
\mbox{ \,\,\,and\,\,\,  } \mathcal H_{\Delta',{\bf a}'}=\{h'_1,\ldots,h'_s\}.
\end{equation}
   Let  
 ${\bf a}= (a_1,\ldots,a_r),\,\,\,{\bf a}'=(a'_1,\ldots,a'_s)$. 
 Let $h=(h_1,\ldots,h_r)\colon \cube\to \kube$
 and $R=h(\cube)$ be the range of $h$.
The separation hypothesis is to the effect 
that  $h$ is a piecewise linear
 homeomorphism of $\cube$ onto $R$.
 For each  $i=1,\dots,r,$  all linear pieces
 of $h_i$  have integer coefficients.
As in the proof of Theorem 
\ref{theorem:separation-is-decidable}
(also see \cite[18.1]{mun11}),  we now compute a rational
subdivision $\Delta^*$  of 
$\Delta'$  such that  
$h$ is linear on each simplex of 
$\Delta'$  (by definition of  $\mathcal H_{\Delta,{\bf a}}$\,,
\,\,$\Delta^*$ is automatically
a subdivision of $\Delta$). 
Then the set
$$
h(\Delta^*)=\{h(T)\mid T\in \Delta^*\}
$$
is a rational triangulation of $R$.
%
%
%
%
Let   $\nabla$ be 
the
regular subdivision of 
$h(\Delta^*)$ obtained by the desingularization
process in  \cite[2.9]{mun11}. Let 
$w_1,\ldots,w_u,$ be the vertices of $\nabla$ and  $p_1,\ldots,p_u$ 
their respective
Schauder hats. 
Since all steps of the desingularization
process in  \cite[2.9]{mun11} 
are effective,  $\nabla$  is effectively computable.
Upon writing
$$
\Sigma=h^{-1}(\nabla)=\{h^{-1}(U)\mid U\in \nabla\},
$$
we get  a rational triangulation $\Sigma$ of $\cube$
that jointly subdivides $\Delta$ and $\Delta',$ 
%
%
(whence  $h$ is linear on
each simplex of $\Sigma$).
   Evidently,  $\Sigma$  is computable as the list
   of the sets of  vertices of its simplexes.
Let $v_1=h^{-1}(w_1),\ldots,v_u=h^{-1}(w_u)$ be the vertices of $\Sigma$.
The piecewise linear functions 
$$
q_1=p_1\circ h,\ldots,q_u=p_u\circ h
$$
have integer coefficients.
%
%
%
Since $\den(h(v_i))$ is a divisor
of  $\den(v_i)$, the rational number  $q_i(v_i)=1/\den(h(v_i))$ is an
integer multiple of $1/\den(v_i),$  say
 $$\mathbb Z \ni c_i=\den(v_i)/\den(h(v_i)),\,\,\, (i=1,\dots,u).$$ 
Letting now 
${\bf c}=(c_1,\ldots,c_u)$,    we have a 
weighted triangulation  $(\Sigma,{\bf c})$ 
such that  $\mathcal H_{\Sigma,{\bf c}}$ is a basic generating set
of  $A$, just as  the set
$\{p_1,\ldots,p_u\}$  generates $\McN(R)$, 
by \cite[5.8]{mun11}.

\smallskip
Recalling \eqref{equation:indici},
we are now  ready to decide whether  $A\supseteq A'$ as follows:
\smallskip

For every  $j=1,\ldots,s$  and  $n$-simplex  $T\in \Sigma$   let
  $f_{T,j}=h'_j\restrict T$ be the restriction to $T$ of the $j$th 
hat of the basic set  $\mathcal H_{\Delta',{\bf a}'}$. 
  Observe that $f_{T,j}$ is linear
on $T$ and has linear coefficients. We now check whether  $f_{T,j}$
is obtainable as a sum of   positive  ($>0$)    integer multiples  
 of some of the  hats $q_i\restrict T$.
Let  $V_T$ be the set of vertices of $\Sigma$
lying in $T$, with the corresponding set of hats  $H_T=\{q_v\mid v\in T\}
\subseteq \{q_1,\ldots,q_u\}$=the hats of the basic set 
$\mathcal H_{\Sigma,{\bf c}}$ of $A$.

\medskip
\noindent {\it Case 1:}    For each  $j=1,\ldots,s,$    $\,\,\,T\in \Sigma$ and
  $v\in V_T$,   $q_{v}(v)$ is a divisor of
$f_{T,j}(v)$.

Then the linear function  $f_{T,j}$ coincides (over $V_T$, 
and hence) 
over $T$
with a suitable sum of positive integer multiples of  the hats of
$H_T$.   Direct inspection shows that
 $h'_j$ is a sum of integer multiples of some hats in 
 $\mathcal H_{\Sigma,{\bf c}}$.
 It follows that
  $\mathcal H_{\Sigma,{\bf c}}$
 generates $A',$  and we conclude   $A\supseteq A'$.

\medskip
\noindent {\it Case 2:}   For some  $j=1,\ldots,s,$    $\,\,\,T\in \Sigma$ and
  $v\in V_T$,
  $q_{v}(v)$ is not a divisor of
$f_{T,j}(v)$. 

The possible values of functions in $A$ at $v$
are integer multiples of  $q_{v}(v)$, because all  other  hats
of $\mathcal H_{\Sigma,{\bf c}}$ 
vanish at $v$.
Therefore,   $h_j$  does not belong to $A$, whence
the inclusion   $A\supseteq A'$  fails.

\medskip This completes the decision procedure
for   $A= A'$.
 \end{proof}
 
\smallskip
\begin{problem}
{\rm 
Prove or disprove the decidability of the following problem:

\smallskip
\noindent
${\mathsf{INSTANCE}:}$  MV-terms  $\tau_1,\ldots,\tau_k$ and
 $\sigma_1,\ldots,\sigma_l$ in the variables
$X_1,\ldots,X_n$.

\smallskip
\noindent
${\mathsf{QUESTION}:}$  Is the subalgebra  of
$\McNn$ generated by   $\hat\tau_1,\ldots,\hat\tau_k$
equal to    the subalgebra  of
$\McNn$ generated by   $\hat\sigma_1,\ldots,\hat\sigma_l$?
}
\end{problem}

\section{Conclusions: Two types of  presentations}
\label{section:final}

Throughout  this paper, MV-algebras
$A\subseteq \McNn$  (resp., unital $\ell$-groups $(G,u)\subseteq
\McN_{\rm group}(\I^n)$\,)
 have been ``effectively presented'' 
 by  
 a finite string of symbols
$\tau_1,\ldots,\tau_u$,  where each  $\tau_i=
\tau_i(X_1,\ldots,X_n)$ is an MV-term
(resp., a unital $\ell$-group term) 
in the variables $X_1,\ldots,X_n$.
%
%
%
%
 
 Traditional  finite presentations   are instead
defined as we did in Section \ref{section:presentation},   
  by a single  $k$-variable term  $\sigma$, letting
 $Z_\sigma\subseteq \I^k$  be the zeroset  
 $\hat\sigma^{-1}(0)$ of the
 McNaughton function 
 $\hat\sigma\colon \I^k\to \I$ associated to $\sigma$, and  
 setting  
 \begin{equation}
 \label{equation:finale}
 A_\sigma=\McN(\I^k)/ \mathfrak j_\sigma \cong \McN(Z_\sigma),
 \end{equation}
  where  $\mathfrak j_\sigma$ is the principal ideal of 
 $\McN(\I^k)$  generated by  $\hat\sigma$. 
 One similarly defines  finite presentations of unital $\ell$-groups,
 \cite{cab,cabmun-ccm, mun08}.

By \cite[6.6]{mun11},
every finitely generated subalgebra of $\McNn$
is finitely presented, but  not every finitely presented
MV-algebra   is isomorphic to  
a subalgebra of  a free MV-algebra.  
For instance,   
$\{0,1\}\times\{0,1\}$
(and more generally, every non-simple finite MV-algebra) is finitely
presented but is not isomorphic to a subalgebra
of a free MV-algebra, because its maximal spectral
space is disconnected.  
Thus one may reasonably  expect that decision problems that
are unsolvable for finitely presented
 $\ell$-groups and open for finitely presented unital
$\ell$-groups  (equivalently,  for finitely presented MV-algebras),
turn out to be decidable for {\it separating} 
subalgebras
of $\McNn$
presented  via their generators $\hat\tau_1,\ldots,\hat\tau_u$. 
Here is an example of this state of affairs:

\begin{itemize}
\item[---] As shown in \cite[Theorem D]{glamad},
 for each fixed $k\geq 6$
the property of being a free $k$-generator
$\ell$-group is undecidable.  This follows from 
Markov's  celebrated unrecognizability theorems  (see \cite{sht}
for a detailed account.)

\item[---]  The same problem for unital $\ell$-groups and MV-algebras
 is open, except for  $k=1$, where the
 problem is decidable,  (see \cite[18.3]{mun11}).
 
\item[---]   Theorem \ref{theorem:decidable-sw}
shows the decidability of 
the  problem whether
a  finitely generated {\it separating} subalgebra of $\McNn$
is isomorphic to $\McNn$.
\end{itemize}

The  separation hypothesis   plays a crucial role in
most decidability results of the earlier sections.
As a matter of fact, 
the final two results of this paper will show that
(without the separation hypothesis),  
for all decision
problems concerning
  finitely generated subalgebras $A$ of free algebras,
it is immaterial whether $A$ is presented by a list of
generators
$\hat\tau_1,\ldots,\hat\tau_u$ 
 or by a principal ideal $\mathfrak j_\sigma$  
of some free algebra.

\begin{theorem}
\label{theorem:generators-to-quotient}
There is a computable transformation
of every presentation of an MV-algebra
 $A\subseteq \McNn$ by a list of generators
$\hat\tau_1,\ldots,\hat\tau_u$,  into a presentation
of an isomorphic copy  $A_\sigma$ of 
$A$ as a principal quotient of some finitely
generated free MV-algebra as in \eqref{equation:finale}.  
\end{theorem}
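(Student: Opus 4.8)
The plan is to realize $A$ as $\McN(R)$ for a computable rational polyhedron $R$, and then to exhibit $R$ as the zeroset of a single effectively computable MV-term. Concretely, I would set
\[
g=(\hat\tau_1,\ldots,\hat\tau_u)\colon\cube\to\I^u,
\]
a $\mathbb Z$-map since each $\hat\tau_i$ has integer coefficients, and let $R=g(\cube)$ be its range. By \cite[3.4]{mun11}, $R$ is a rational polyhedron in $\I^u$, and by \cite[3.6]{mun11} the subalgebra $A$ generated by $\hat\tau_1,\ldots,\hat\tau_u$ (which is exactly the range of $\McN(g)$) is isomorphic to $\McN(R)$. Note that no separation hypothesis enters here, so $g$ need not be one-to-one. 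Recalling \eqref{equation:finale} and the duality of Section \ref{section:presentation}, if I can compute a $u$-variable MV-term $\sigma$ with $\hat\sigma^{-1}(0)=R$, then $A_\sigma=\McN(\I^u)/\mathfrak j_\sigma\cong\McN(R)\cong A$, finishing the proof with $k=u$.

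First I would compute $R$ explicitly as a finite union of rational simplexes. Exactly as in the proof of Theorem \ref{theorem:separation-is-decidable}, by listing the linear pieces of each $\hat\tau_i$ and passing to a joint subdivision I obtain a rational triangulation $\Delta$ of $\cube$ over which $g$ is linear. Then
\[
R=\bigcup_{T\in\Delta}g(T),
\]
where each $g(T)=\conv(g(v_0),\ldots,g(v_m))$ is a rational simplex whose vertices are computable from the integer-coefficient linear piece of $g$ on $T$. This presents $R$ as an explicit finite list of rational simplexes.

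Next I would turn $R$ into a McNaughton function vanishing precisely on it. For a single rational polytope $g(T)$, I would first compute (by the effective vertex-to-halfspace conversion) a description $g(T)=\bigcap_i\{x:\ell_{T,i}(x)\ge 0\}$ as an intersection of finitely many rational half-spaces, using pairs $\ell,-\ell$ to encode the affine hull when $\dim g(T)<u$; clearing denominators I may assume each $\ell_{T,i}$ has integer coefficients. Then
\[
f_T=\Bigl(\bigvee_i\bigl(0\vee(-\ell_{T,i})\bigr)\Bigr)\wedge 1
\]
is a continuous piecewise linear $\I^u\to\I$ map with integer coefficients, hence a McNaughton function, and it vanishes exactly on $g(T)$. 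Consequently $f=\bigwedge_{T\in\Delta}f_T$ is a McNaughton function with $f^{-1}(0)=\bigcup_T g(T)=R$. Applying the effective form of McNaughton's theorem \cite[9.1.5]{cigdotmun}, I would compute an MV-term $\sigma=\sigma(X_1,\ldots,X_u)$ with $\hat\sigma=f$, so that $\hat\sigma^{-1}(0)=R$ and $A_\sigma\cong\McN(R)\cong A$, as required.

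The polyhedral book-keeping (subdividing so that $g$ is linear, extracting the half-space description of each $g(T)$, clearing denominators) is entirely effective and routine. The one step deserving care is the passage from the geometric description of $R$ to an actual MV-term: this is the effective realization of a prescribed rational polyhedron as a McNaughton zeroset followed by an application of the constructive proof of McNaughton's theorem. I expect this to be the main, though standard, technical point, with the remainder of the argument being straightforward.
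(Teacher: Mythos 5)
Your proposal is correct, and it follows the paper's overall skeleton (compute $R=g(\cube)$ as a rational polyhedron, invoke \cite[3.6]{mun11} to get $A\cong\McN(R)$, then produce a term $\sigma$ with $Z_\sigma=R$), but it realizes the crucial last step by a genuinely different construction. The paper computes a \emph{regular} triangulation $\Delta$ of $\I^u$ restricting to a full triangulation $\Delta_R$ of $R$, writes down MV-terms for the Schauder hats of $\Delta$ via \cite[9.1.4(ii)]{cigdotmun}, and takes $\sigma$ to be the $\oplus$-sum of the hats whose vertices lie off $R$; fullness of $\Delta_R$ is exactly what guarantees $Z_\sigma=R$. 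You instead bypass regular triangulations of the ambient cube entirely: you convert each simplex $g(T)$ to an integer half-space description and assemble $f=\bigwedge_T\bigl(\bigl(\bigvee_i(0\vee(-\ell_{T,i}))\bigr)\wedge 1\bigr)$, whose zeroset is $R$ by a direct pointwise check, then extract $\sigma$ from the constructive McNaughton theorem. Both are effective; your route is more elementary and self-contained (it needs only the V-to-H conversion for rational polytopes plus the standard fact that truncated integer linear functions and lattice combinations thereof admit computable MV-terms), while the paper's route stays inside the Schauder-hat machinery it uses throughout and produces $\sigma$ in the structured form of a sum of hats, which is what its other arguments (e.g.\ Theorem \ref{theorem:effective}) trade in. The one point you should make explicit is that the half-space description must include the affine-hull equalities for lower-dimensional simplexes $g(T)$ (which you do note), since otherwise $f_T$ would vanish on the full-dimensional polytope spanned by the inequalities rather than on $g(T)$ itself; with that in place the argument is complete.
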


\begin{proof}
Following  \cite[18.1]{mun11}, from the input
MV-terms $\tau_1,\ldots,\tau_u$ 
 we first compute the rational polyhedron  $R\subseteq \I^u$ 
 given by  the range of the function  $g=(\hat\tau_1,\ldots,
\hat\tau_u)$.  
By \cite[3.6]{mun11},    $A\cong \McN(R)$.
Next, in the light of \cite[2.9, 18.1]{mun11},
we list the sets of vertices of the simplexes of a regular  
triangulation $\Delta$  of $\kube$ such  
that  the set  $\Delta_R=\{T\in \Delta\mid T\subseteq R\}$ is a 
 triangulation of $R$.
 Without loss of generality,  $\Delta_R$ is {\it full}: any simplex
 of $\Delta$ all of whose vertices lie in $\Delta_R$ is a simplex
 of $\Delta_R$.  
Following  the proof of
 \cite[9.1.4(ii)]{cigdotmun}, we
compute  MV-terms $\rho_1,\dots, \rho_w$ in the variables
$Y_1,\dots,Y_u,$
whose associated McNaughton functions  $\hat\rho_1,\dots, 
\hat\rho_w$ constitute
the set   $\mathcal H_\Delta$  
 of Schauder hats of $\Delta$, as defined in \cite[9.1.3]{cigdotmun}.
 The $\oplus$-sum 
  of all hats  with vertices not belonging to $R$
   (coincides with their pointwise sum taken in the
   unital $\ell$-group  $\McN_{\rm group}(R)$ and)
 provides an  MV-term
 $\sigma(Y_1,\ldots,Y_u)$,  together with 
 its associated  McNaughton function $\hat\sigma\in \McN(\I^u)$.
 Since $\Delta_R $ is full,  the
  zeroset $Z_\sigma$ of  $\hat\sigma$  coincides with $R$.  
The  isomorphisms
   $$A\cong\McN(g(\I^n)) = \McN(R)=\McN(Z_{\sigma})\cong 
     \McN(\I^u)/\mathfrak j_\sigma = A_\sigma, $$
yield a finite presentation of $A$ as a principal
quotient of $ \McN(\I^u)$.
\end{proof}

Conversely, we can prove:

\begin{theorem} 
\label{theorem:finale} 
For any
arbitrary input MV-term  $\sigma=\sigma(Y_1,\ldots,Y_k)$, we have:

\begin{itemize}
\item[(i)] It is decidable whether the MV-algebra 
 $A_\sigma=\McN(\I^k)/ \mathfrak j_\sigma\cong
 \McN(Z_\sigma)$ 
is isomorphic to a 
subalgebra  of a free MV-algebra.

\smallskip
\item[(ii)]
In case $A_\sigma$
is isomorphic to a 
subalgebra  of a free MV-algebra,   
$\sigma$ can be effectively transformed  into a 
finite  list of MV-terms
$\tau_i$
in $n$  variables,  in such a way that
$A_\sigma$  is isomorphic to 
the 
subalgebra $A$ of the free MV-algebra $\McNn$
generated by the set of   
$\hat\tau_i$.
\end{itemize}
\end{theorem}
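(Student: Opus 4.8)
The plan is to read both parts through the duality $\McN$ between rational polyhedra with $\mathbb{Z}$-maps and finitely presented MV-algebras. First I would compute from $\sigma$ the rational polyhedron $Z_\sigma=\hat\sigma^{-1}(0)\subseteq\kube$, as a list of vertex-sets of a regular triangulation (exactly as in the proof of Theorem \ref{theorem:separation-is-decidable} and \cite[2.9, 18.1]{mun11}), so that $A_\sigma\cong\McN(Z_\sigma)$ by \eqref{equation:finale}. The key translation is that a finitely generated subalgebra $A\le\McNn$ with generating map $g$ satisfies $A\cong\McN(g(\cube))$, while dually an injective homomorphism $\McN(Z_\sigma)\to\McNn$ corresponds to a surjective $\mathbb{Z}$-map $\cube\to Z_\sigma$. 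Hence $A_\sigma$ is isomorphic to a subalgebra of a free MV-algebra if and only if, for some cube $\cube$, there is a surjective $\mathbb{Z}$-map of $\cube$ onto $Z_\sigma$; call this property $(\star)$. Since surjective $\mathbb{Z}$-maps compose, $(\star)$ is a $\mathbb{Z}$-homeomorphism invariant, hence intrinsic to the isomorphism type of $A_\sigma$, and I may test it directly on $Z_\sigma$.

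For part (i) I would reduce $(\star)$ to a decidable polyhedral condition on $Z_\sigma$. Three features are necessary: (a) connectedness, since a continuous image of the connected cube is connected---this is precisely the obstruction flagged for $\{0,1\}\times\{0,1\}$ in Section \ref{section:final}; (b) a point of denominator $1$, because the image $g(0,\dots,0)$ of a cube vertex has denominator dividing $1$, and this is finitely checkable since the denominator-$1$ points of $\kube$ are exactly the $2^k$ vertices in $\{0,1\}^k$; and (c) the existence of a strongly regular triangulation of $Z_\sigma$, by \cite[4.10]{cab}. Condition (c) is genuinely needed: a segment such as $\conv((1/2,0),(0,1/2))$ lies on the line $x_1+x_2=1/2$, so it meets the image of any single $\mathbb{Z}$-linear piece in at most one point (a collinear piece would force an integer constant term of coordinate-sum $1/2$); thus it cannot be covered by a $\mathbb{Z}$-map and has no strongly regular triangulation. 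I would then invoke \cite{cab} for the converse---given (a)--(c) one constructs a surjective $\mathbb{Z}$-map of a cube onto $Z_\sigma$---so that $(\star)$ is equivalent to (a)$\wedge$(b)$\wedge$(c). Decidability of (a) is graph-connectivity of the computed complex, (b) is the finite vertex check, and the substantive input is the decidability of (c), which I take from the effective theory of strongly regular triangulations in \cite{cab}.

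For part (ii), assuming $(\star)$ holds, I would make the witness explicit. From a strongly regular triangulation $\Delta$ of $Z_\sigma$ together with a denominator-$1$ vertex and the connectivity of $Z_\sigma$, the effective constructions of \cite[4.10]{cab} and \cite{cabmun-ccm} produce a surjective $\mathbb{Z}$-map $h=(h_1,\dots,h_k)\colon\cube\to\kube$ with $h(\cube)=Z_\sigma$; here each maximal simplex of $\Delta$ has vertex denominators of $\gcd$ equal to $1$, which is exactly what lets the local $\mathbb{Z}$-linear surjections be glued along a path emanating from the denominator-$1$ vertex. Each component $h_j\in\McNn$, so by the effective McNaughton theorem \cite[9.1.4]{cigdotmun} I can write MV-terms $\tau_j$ with $\hat\tau_j=h_j$. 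Dualizing by \eqref{equation:functor}, $\McN(h)\colon\McN(Z_\sigma)\to\McNn$ is injective (the dual of a surjection), and since $\McN(Z_\sigma)$ is generated by the restricted coordinate functions $y_j\restrict Z_\sigma$ with $\McN(h)(y_j\restrict Z_\sigma)=y_j\circ h=h_j=\hat\tau_j$, its image is exactly the subalgebra $A$ of $\McNn$ generated by $\hat\tau_1,\dots,\hat\tau_k$. Thus $A_\sigma\cong\McN(Z_\sigma)\cong A$, as required.

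I expect the main obstacle to be geometric rather than logical: establishing the equivalence $(\star)\Leftrightarrow(a)\wedge(b)\wedge(c)$ together with its effectivity, that is, both the decidability of ``has a strongly regular triangulation'' for part (i) and the constructive gluing that yields the surjection $h$ for part (ii). Everything else---computing $Z_\sigma$, testing connectedness and denominator-$1$ vertices, and converting the linear pieces of $h$ into MV-terms---is routine given the effective procedures already used in the earlier sections.
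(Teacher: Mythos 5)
Your proposal is correct and follows essentially the same route as the paper: compute a regular triangulation of $Z_\sigma$, decide (i) via the characterization of \cite[4.10]{cab} by the same three conditions (meeting a vertex of the cube, connectedness, strong regularity checked through the gcd of vertex denominators on maximal simplices), and obtain (ii) by making the surjective $\mathbb{Z}$-map from a cube onto $Z_\sigma$ explicit via the constructions of \cite[4.10]{cab} and \cite{cabmun-ccm} and then converting its components into MV-terms. The only difference is presentational: the paper factors the surjection explicitly as $\eta\circ\gamma$ through a collapsible intermediate triangulation, a detail you delegate to the cited references.
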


\begin{proof}
Using a variant of  the algorithm $\mathsf{Mod}$ of
 \cite[18.1]{mun11},
 we first  compute
 a
 regular triangulation  $\Delta$
 whose support is the zeroset $Z_\sigma \subseteq \kube$ 
of $\hat\sigma$.
The proof of (i) and (ii) then proceeds as follows:

\smallskip
(i)  In \cite[4.10]{cab} it is proved that
$A_\sigma$ is isomorphic to a (necessarily finitely generated) 
subalgebra  of a free MV-algebra iff the
following three conditions hold:

 \begin{itemize}
\item[(a)]  $Z_\sigma$ intersects the set of vertices of $\kube$;  
\item[(b)]  $Z_\sigma$ is connected; 
\item[(c)] $Z_\sigma$ is strongly regular.  
\end{itemize}

From  $\Delta$, explicitly given by the sets of vertices
of its simplexes,   conditions (a)-(b)
can be  immediately checked.
To check condition (c), for each
maximal simplex $M$ of $\Delta$, one checks whether the greatest common divisor of the denominators
of the vertices of $M$ equals 1. This completes the proof of (i).

\medskip


(ii) Assuming 
 all three checks (a)-(c)  are successful, following
 the proof of the characterization theorem 
 \cite[4.10]{cab}  we output the desired
MV-terms  $\tau_i$ through  the following steps: 
 
 \begin{itemize} 
 \item[(d)]  
From $\Delta$  we compute a  
{\it collapsible} triangulation $\Delta'$ whose support
  lies in the cube  $\cube$,  for some suitably large
  integer  $n$.

    \item[(e)] Next we compute  a simplicial map from 
    $\Delta'$ to $\Delta$, providing
    a  $\mathbb Z$-map 
    $\eta$ from the support of $\Delta'$ onto the support
    $Z_\sigma $ of $\Delta$.

    \item[(f)] 
Following now  the proof of  \cite[5.1]{cabmun-ccm},
we compute a
    $\mathbb Z$-map  $\gamma$  which is a retraction
    of $\cube$ onto the support of $\Delta'$.

   \item[(g)] 
Letting  $\pi_i\colon \I^k \to \I$ denote the $i$th
 coordinate function, we observe that   for each
 $i\in\{1,\ldots,k\}$  the $\mathbb Z$-map
 $
 \pi_i\circ \eta\circ\gamma\mbox{ belongs to } \McN(\cube).
 $
Since both $\mathbb Z$-maps $\gamma$ and $\eta$
 are explicitly given,
 an application of \cite[9.1.5]{cigdotmun} yields
 MV-terms   $\tau_1,\ldots,\tau_k$ in the
 variables $X_1,\dots,X_n$ such that 
  $\hat\tau_i=\pi_i\circ\gamma\circ \eta$.
  \end{itemize}

  Let $A$ be the subalgebra of
  $\McNn$  generated by
  $\hat\tau_1,\dots, \hat\tau_k$.
The final part of the proof of  \cite[4.10]{cab} yields  
     \[A_\sigma=
     \McN(\I^u)/\mathfrak j_\sigma \cong
     \McN(Z_{\sigma})=
     \McN((\gamma\circ\eta)(\I^n))\cong
     A.\hfill\qedhere
     \] 
\end{proof}

\section*{Funding}
The first author was supported by a 
 Marie Curie Intra European Fellowship 
 within the 7th European Community Framework 
 Program (ref. 299401, FP7-PEOPLE-2011-IEF).

{\small

 }

 \end{document}